\documentclass[10pt,reqno]{amsart}
\usepackage{networkdesign}
\usepackage{natbib}

\title{Network Design with Probabilistic Capacities}
\author{\small{Alper Atamt\"urk and Avinash Bhardwaj}}

\thanks{ \noindent \hskip -1mm
A. Atamt\"urk: Department of Industrial Engineering \& Operations Research, University of California, Berkeley, CA 94720.
\texttt{atamturk@berkeley.edu}   \\
A. Bhardwaj: Department of Industrial Engineering \& Operations Research, University of California, Berkeley, CA 94720. \texttt{avinash@ieor.berkeley.edu}.
Currently: CORE, Université catholique de Louvain, Voie du Roman Pays 34, B-1348 Louvain-la-Neuve, Belgium. \texttt{avinash.bhardwaj@uclouvain.be}.
}

\begin{document}

\maketitle

\begin{abstract}
We consider a network design problem with random arc capacities and give a formulation with a probabilistic capacity constraint on each cut of the network. To handle the exponentially-many probabilistic constraints a separation procedure that solves a nonlinear minimum cut problem is introduced. For the case with independent arc capacities, we exploit the supermodularity of the set function defining the constraints and generate cutting planes based on the supermodular covering knapsack polytope. For the general correlated case, we give a reformulation of the constraints that allows to uncover and utilize the submodularity of a related function. The computational results indicate that exploiting the underlying submodularity and supermodularity arising with the probabilistic constraints provides significant advantages over the classical approaches. \\

\noindent
\textit{Keywords}. Probabilistic constraints, submodularity, supermodularity, polymatroids, correlation.
\end{abstract}

\begin{center} August 2016; April 2017 \end{center}

\section{Introduction}

\label{sec:intro}

Designing networks with random components often necessitates building costly extra capacity to satisfy a desired service level.
Examples include communication networks, air and ground traffic networks, supply-chain networks.
Uncertainties in network capacities  may be caused by a variety of natural or artificial factors.
In the context of air or ground transportation network infrastructure, the capacity uncertainties are typically due to weather conditions and traffic along with other factors \cite{Chen:survey}. Similar capacity uncertainties arise in the design and operation of communication network infrastructure, power grid, and related applications \citep{classen11,Kennington:2010,koster2013robust,YW:TEP}. It is of significant interest to
policy makers and other stakeholders to understand the trade-off between the network design cost and the service level to assess
the value of additional capacity investments. To this end, we give an optimal network design model with probabilistic capacity constraints and propose effective solution methods that exploit the underlying combinatorial properties.

Let ${N}({V}, {A})$ be a network with vertices $V$ and arcs $A$ with $m = |A|$.
 Given random arc capacities $\xi_a, \ a \in A$, we are interested in the problem of selecting a minimum-cost
 subset of the arcs to satisfy the demand on the network with a high probability.
 Without loss of generality, we may assume that there is a single source $s$ and a single sink $t$ with demand $d$, as
 one can collate sources and sinks into a single node each respectively via arcs with deterministic bounds.
 Let $h_{a}$  be the fixed ``design" cost of arc  $a \in {A}$ for inclusion in the selected sub-network.

Since the maximum flow on the network is limited by the capacity of a minimum cut, we need to ensure sufficient capacity on each cut of the network.	
The problem is modeled as finding a minimum-cost subset of the arcs so that all cuts of the network jointly have sufficient capacity to meet the demand with probability at least  $1 - \epsilon$, where $0 < \epsilon \le 0.5$. Letting $x_{a}$ be 1 if arc $a$ is selected, 0 otherwise, the probabilistic network design problem is formulated as:
\begin{align}
\label{eq:chanceconstrained}
\JPND  \  \  \min_{\x \inset \binset^m} \ \textbf{h}\,' \x \text{ : }  \text{\textbf{Pr}} \left(\sum_{a \in {C}}\,{\xi_{a} x_{a}} \geq d, \,\,\forall\,C \inset \mathcal{C} \right) \geq 1-\epsilon,
\end{align}
where $\mathcal{C}$ denotes the set of all $s-t$ cuts in the network ${N}(V,A)$. The constraint set with joint probabilistic constraints across all the cuts of the network
is non-convex and not well-understood even for the continuous relaxation of the problem \cite{prekopa-book,SDR:lectures}. Therefore, for efficient computational methods,
one often resorts to convex approximations of the joint probabilistic constraints \cite{NS:convex,A:convex}.

In this paper, we study an approximate model with a disjoint probabilistic capacity constraint for each cut of the network.
That is, we look for a minimum cost subset of the arcs so that each cut of the selected sub-network satisfies the demand with probability at least  $1 - \epsilon$:
\begin{align}
\label{eq:chanceconstrained}
\PND  \  \  \min_{\x \inset \binset^m} \ \textbf{h}\,' \x \text{ : }  \text{\textbf{Pr}} \left(\sum_{a \in {C}}\,{\xi_{a} x_{a}} \geq d\right) \geq 1-\epsilon, \,\,\,\forall\,C \inset \mathcal{C},
\end{align}
where $\mathcal{C}$ denotes the set of all $s-t$ cuts in the network ${N}(V,A)$.

If $\bs\xi$ is normally distributed with mean $\bs{\mu}$ and covariance $\bs\Sigma$, problem \eqref{eq:chanceconstrained} is equivalently stated with conic quadratic constraints:
\begin{align}
\label{eq:equivalence}
\CQND  \ \  \ \min_{\x \inset \binset^m} \ \textbf{h}\,' \x \text{ : }  {\bs \mu}_C' \x - \Omega\norm{\bs{\Sigma}_C^{1/2}\,\x}\, \geq\, d,
     \,\forall\,\,C \inset \mathcal{C},
\end{align}
where $\Omega = \phi^{-1}(1-\epsilon)$ and $\phi$ is the standard normal c.d.f.,
$\bs{\mu}_{{C}}$ is an $m$-vector of mean capacities for the arcs in cut ${C}$, and zero otherwise, and $\boldsymbol{\Sigma}_{{C}}$ is an $m \times m$-matrix of covariances for the arcs in ${C}$ and zero otherwise. 
The term $\Omega\norm{\bs{\Sigma}^{1/2}\,\x}$ is used to build sufficient slack into the constraint to accommodate the variability of $\bs \xi$ around its mean $\bs \mu$.
Problem \eqref{eq:equivalence} is \NP-hard as the the deterministic network design problem with $\Omega = 0$ is \NP-hard.
We use $\sigma_{ij}$ to denote the $(ij)$th coefficient of the covariance matrix $\bs \Sigma$ and $\sigma_i^2$ for the $i$th variance ($\sigma_{ii})$.

If the distribution of $\bs\xi$ is unknown, conservative robust versions can be modeled as \eqref{eq:equivalence} by using appropriate choices of $\Omega$.  For example, if $\xi_i$'s are known only through their first two moments, then inequalities in \eqref{eq:equivalence} with $\Omega = \sqrt{(1-\epsilon)/\epsilon}$ imply the probabilistic constraints in \eqref{eq:chanceconstrained} \cite{Bertsimas2005,Ghaoui2003}. Alternatively, if $\xi_i$'s are independent and symmetric with support $[u_i - \sigma_i, u_i + \sigma_i]$, then inequalities in \eqref{eq:equivalence} with $\Omega = \sqrt{\ln(1/\epsilon)}$ and diagonal $\bs \Sigma = \bs D$, where $D_{ii} = \sigma_i^2$, imply the probabilistic constraints in \eqref{eq:chanceconstrained} \cite{Ben-Tal2000,Ben-Tal2002}. Hence, under different assumptions on $\boldsymbol{\xi}$, one arrives at different instances of the model \eqref{eq:equivalence} with conic quadratic constraints ($\Omega \ge 0$).

\textit{Contributions}.
There are two main challenges in solving \CQND. The first one is that the formulation has an exponentially-many conic quadratic constraints --- one for each cut of the network. We use a separation approach to generate only a small subset of the conic quadratic inequalities. Finding a violated conic quadratic constraint \eqref{eq:equivalence} is modeled as a minimum cut problem with a concave objective, which includes the \NP-hard max-cut problem as a special case. We give alternative formulations and compare them for the independent and correlated cases.
The second challenge is that there is often a large gap between the optimal objective and its convex relaxation. In order to strengthen the convex relaxations, we derive strong cutting planes that make use of submodularity. In particular, we
show that when the capacities are independent, under a mild assumption, the feasible
set of \CQND \ is the intersection of monotone supermodular covering knapsacks. For the general correlated case, we give a reformulation to uncover the combinatorial structure of another set function for the underlying constraints. This reformulation allows us to make use of submodularity
even if the original constraint function is not submodular. The proposed reformulation is independent from the network structure and, therefore, it can be useful for other combinatorial problems with a risk objective or constraint with correlated random variables.

\textit{Literature review}.
By now there is large body of literature on the polyhedral investigations of deterministic capacitated network design \citep[e.g.,][]{ANS:avub,AG:mixset,A:nd,A:fp,BG:cnd,HKLS:edge-cap,MM:st-path,MMV:conv-2core,AG:nd-review}. While the capacity uncertainty is largely studied
	in the context of survivable network design that can stand any single link failure 
	\cite[e.g.,][]{BM:surv-cuts,BMM:survconn,BMM:conn-split,GMS:ndcrev,SD:snd,RA:surv-colgen}, the treatment of the variability in the capacities is limited.
Link capacity variability is well documented in transportation networks. \citet{Chen:reliabity02} study the capacity reliability of a road network, that is, the probability that a network can support a given demand at a given service level when the link capacities are uncertain. \citet{LT:netdegrade03} formulate a probabilistic user equilibrium model for maximizing the network flow capacity while accounting for the effects of travel time reliability due to degradable links in terms of predetermined link performance function (proxy for capacity distributions). We refer the reader to \citet{Chen:survey} for a survey on the stochastic transportation network design problem. In the context of communication networks, \citet{classen11} study a broadband wireless network which uses microwave links to deliver data, where the channel conditions account for uncertainty in link capacities. \citet{YW:TEP} consider the transmission expansion problem in a power network for handling uncertainties corresponding to future plant locations in terms of generation expansion and load growth. \citet{Fortz:RobustLinear} study the commodity packing problem on a network with uncertain capacities and linearize the probabilistic linear constraint \eqref{eq:equivalence} when $\xi_a$'s are independent and $\xi_a \sim \mathcal{N}(\mu_a,\lambda\mu_a)\,\,\,\forall\,a \in A$, i.e., constant coefficient of variation. \citet{SLK:joint-chance} give a scenario-based approach to multi-row probabilistic constraints on binary variables. We do not address joint probabilistic constraints or multiple commodities here. Cutting planes for general conic quadratic mixed 0-1 programs are given in \cite{Cezik2005,AN:conicmir,kilincc2014minimal,kilincc2014two}.

\textit{Example}.
We end this section with a motivating example that highlights the value of model \CQND \ in constructing a trade-off curve of service level versus design cost. To do so we construct networks with varying service levels and design costs; and with simulations compute the observed service levels. The data for the sample six-node network is presented in Table \ref{tab:1} of Appendix~\ref{app:data}.

\begin{figure}[b]
	\centering
	\begin{subfigure}{0.3\textwidth}
	\centering
	\begin{tikzpicture}[scale=0.8, auto,swap]
	    \foreach \pos/\name in {{(2,1)/1}, {(4,1)/2},
	                            {(1,0)/s}, {(5,0)/t}, {(2,-1)/3}, {(4,-1)/4}}
	        \node[vertex] (\name) at \pos {$\name$};
	    \foreach \source/ \dest in {s/2,s/4,s/t,2/t,4/t}
	        \path[selected edge] (\source)--(\dest);
	
	    \foreach \source/ \dest in {s/1,s/2,s/3, s/4,s/t,1/2,1/3,1/4,1/t,2/3,2/4,2/t,3/4,3/t,4/t}
	        \path[ignored edge] (\source)--(\dest);
	\end{tikzpicture}
	\caption{service level 50\%}
	\label{fig:1a}
	\end{subfigure}%
	~
	\begin{subfigure}{0.3\textwidth}
	\centering
	\begin{tikzpicture}[scale=0.8, auto,swap]
	    \foreach \pos/\name in {{(2,1)/1}, {(4,1)/2},
	                            {(1,0)/s}, {(5,0)/t}, {(2,-1)/3}, {(4,-1)/4}}
	        \node[vertex] (\name) at \pos {$\name$};
	    \foreach \source/ \dest in {s/1,s/2,s/4,s/t,1/3,2/t,3/t,4/t}
	        \path[selected edge] (\source)--(\dest);
	
	    \foreach \source/ \dest in {s/1,s/2,s/3, s/4,s/t,1/2,1/3,1/4,1/t,2/3,2/4,2/t,3/4,3/t,4/t}
	        \path[ignored edge] (\source)--(\dest);
	\end{tikzpicture}
	\caption{service level 80\%}
	\label{fig:1c}
	\end{subfigure}%
~
	\begin{subfigure}{0.3\textwidth}
	\centering
	\begin{tikzpicture}[scale=0.75, auto,swap]
	    \foreach \pos/\name in {{(2,1)/1}, {(4,1)/2},
	                            {(1,0)/s}, {(5,0)/t}, {(2,-1)/3}, {(4,-1)/4}}
	        \node[vertex] (\name) at \pos {$\name$};
	    \foreach \source/ \dest in {s/1,s/2,s/4,s/t,1/t,2/t,4/t}
	        \path[selected edge] (\source)--(\dest);
	
	    \foreach \source/ \dest in {s/1,s/2,s/3, s/4,s/t,1/2,1/3,1/4,1/t,2/3,2/4,2/t,3/4,3/t,4/t}
	        \path[ignored edge] (\source)--(\dest);
	\end{tikzpicture}
	\caption{service level 97.5\%}
	\label{fig:1c}
	\end{subfigure}
	\caption{Minimum cost networks corresponding to different service levels.}
	\label{fig:1}
\end{figure}
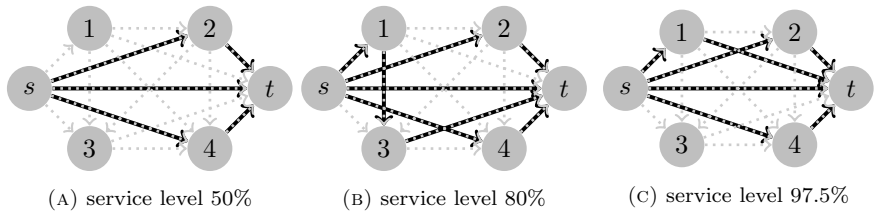

First using reformulation \CQND \ we construct minimum cost networks for six service levels: $50\%$, $70\%$, $80\%$, $97.5\%$, $99\%$, $99.9\%$ . The $50\%$ service level is the nominal case corresponding to the deterministic network design with $\Omega=0$. The network configurations obtained for $50\%, 80\%$ and $97.5\%$ service levels are shown in Figure~\ref{fig:1}.
Note that the networks are not nested, that is, the arcs used for lower service levels are not necessarily included in the networks with higher service levels. The complete set of network configurations for all service levels are displayed in Appendix~\ref{app:conf}.

Next for each network configuration, we generate 10,000 samples from the corresponding arc capacity distributions.
For each network sample we check whether that network satisfies the demand, i.e., whether the minimum cut capacity is greater than or equal to the demand to compute the simulated service level. The results of this simulation are summarized in Table \ref{tab:2}.
The frequency distributions shown in Appendix~\ref{app:dist} indicate that they are skewed and non-normal for all network configurations.
Nevertheless, the simulated service levels are reasonably close to the model service levels and increase monotonically with $1-\epsilon$.

\begin{table}[h!]
\caption{Simulation results.}
\centering
	\begin{tabular}{c|c|c|c|c|c}
		\hline \hline
		\multirow{2}{*}{$1-\epsilon$} & \multirow{2}{*}{cost} & \multicolumn{3}{c|}{minimum cut} & simulated \\\cline{3-5}
		& & min & mean & max & service level \\ \hline
		50\%   & 100\% & 121.9 & 222.1 & 274.6 & 39.81\% \\[1mm]
		70\%   & 104\% & 124.0 & 238.4 & 293.2 & 70.44\% \\[1mm]
		80\%   & 127\% & 154.0 & 249.2 & 306.5 & 82.68\% \\[1mm]
		97.5\% & 135\% & 195.2 & 301.4 & 356.4 & 99.68\% \\[1mm]
		99\%   & 177\% & 209.6 & 303.6 & 360.2 & 99.85\% \\[1mm]
		99.9\% & 186\% & 209.6 & 313.4 & 368.5 & 99.96\% \\
		\hline \hline 
	\end{tabular}
\label{tab:2}
\end{table}

Figure~\ref{fig:tradeoff} shows the trade-off between the cost of the network design and the service level.
The red curve is for the predicted model service level, whereas the blue curve is for the simulated service level.
As expected, the cost of the design increases with the service level, at steeper rates for higher service levels.
The trade-off curve is not convex as the network configuration changes in discrete steps.

\begin{figure}[h!]
\centering
\includegraphics[scale = 0.15]{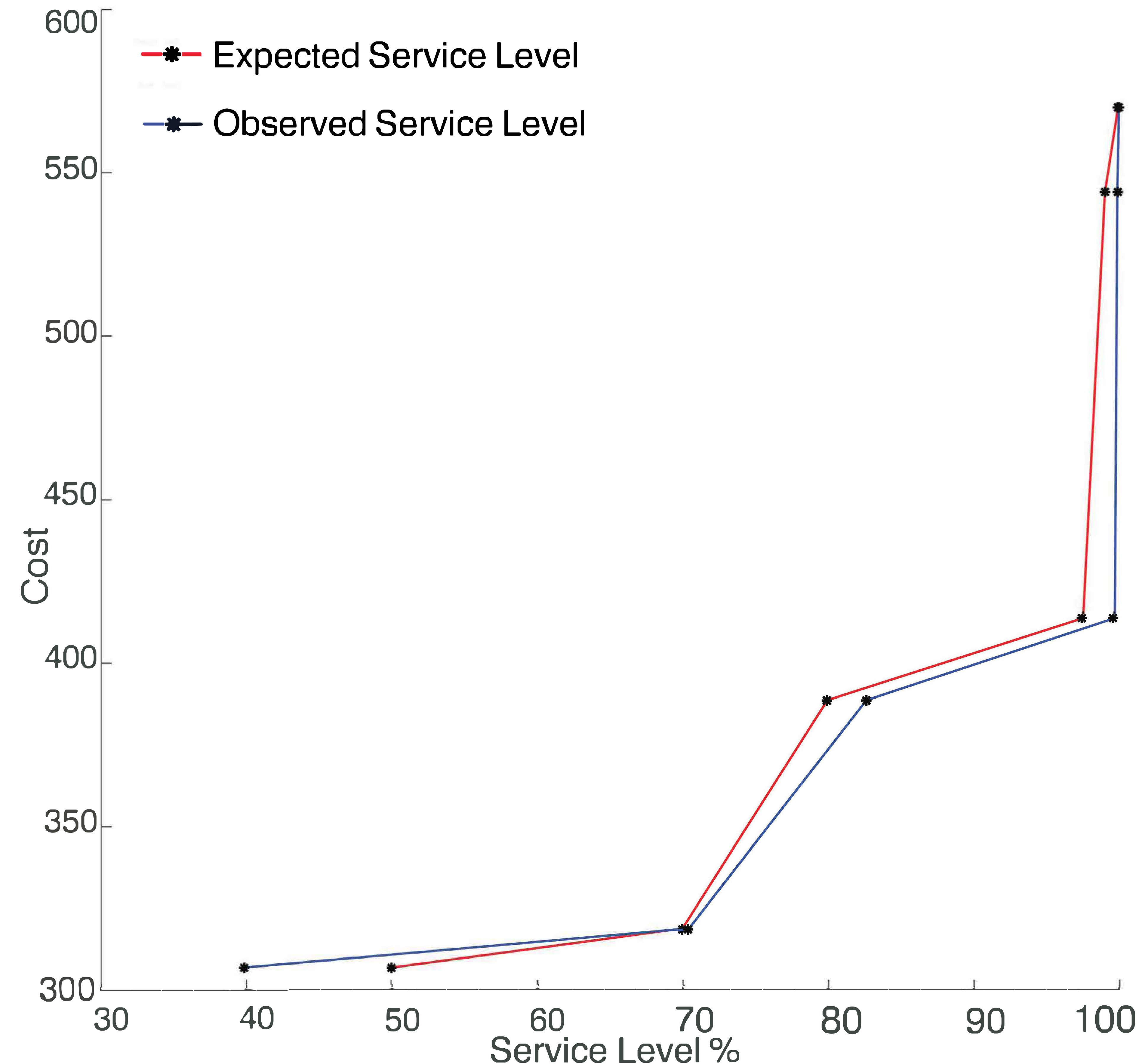}	
\caption{Network cost versus service level, model versus simulated.}
\label{fig:tradeoff}
\end{figure}

The remainder of this paper is organized as follows:
In Section~\ref{sec:strong} we give reformulations and strong valid inequalities for the independent and correlated capacities, separately.
In Section \ref{sec:Separation} we discuss the separation problem for the capacity constraints for each case and compare alternative formulations. In Section \ref{sec:Computations} we present computational experiments on testing the effectiveness of the valid inequalities in solving the probabilistic network design problem.

\section{Strengthening the formulation}
\label{sec:strong}

In this section we describe how to strengthen the convex relaxation of model \CQND. We consider each constraint in \eqref{eq:equivalence} separately and, for simplicity of notation, drop the subscript $C$.
The case with independent arc capacities warrants special consideration as it leads to a specific combinatorial structure. Therefore, we discuss the cases of independent capacities and correlated capacities  separately in Sections \ref{sec:indepMaster} and \ref{sec:corrMaster}.

\subsection{Independent arc capacities}
\label{sec:indepMaster}
When the arc capacities of the network are independent,
the covariance matrix $\bs{\Sigma}$ reduces to a diagonal matrix $\bs D$ of variances; that is, $D_{ii} = \sigma^2_i$.
Then, as $x_i^2 = x_i$ for binary $\x$, each constraint in \eqref{eq:equivalence} reduces to
\begin{align*}
 f(\bs x) =  \bs{\mu}'\x - \Omega\sqrt{\bs{D\,\x}}\, \geq\, d.
\end{align*}

Given a finite ground set $A$, a set function $g\,:\,2^A\,\rightarrow {\R}$ is supermodular if
its difference function
\begin{align*}
    \rho_i(S) := g(S\cup i) - g(S) \,\,\,\text{for } S \subseteq N\setminus i
\end{align*}
is non-decreasing on $N\setminus i$; that is,
$\rho_i(S) \leq \rho_i(T), \,\forall\,S\subseteq T\subseteq N\setminus i$ and $i \in N$ \cite{Schrijver2003}.
If $f$ is supermodular, then $-f$ is submodular and vice-versa.

In a recent paper, \citet{ab:packs} study the polyhedral structure of the supermodular covering knapsack set
\begin{align*}
	K_g \define \set{\x \in \binset^{n}}{g(\x) \geq d}
\end{align*}
for a non-decreasing supermodular set function $g$. They give strong valid inequalities, referred to as the pack inequalities and the extended pack inequalities. Let $P \subseteq N$ be a maximal set satisfying $g(P) < d$. For such a $P$ it is easy to see that the corresponding pack inequality
\begin{align} \label{eq:pack-ineq}
	x(N \setminus P) \ge 1
\end{align}
is valid. Extended pack inequalities generalize \eqref{eq:pack-ineq} through sequential lifting.

It is easy to check that  $f(\bs x) =  \bs{\mu}'\x - \Omega\sqrt{\bs{D\,\x}}$ is supermodular on $\{0,1\}^A$ \cite[e.g.][]{Ahmed2011}. Moreover, it is reasonable to assume that the network capacity increases with the addition of new arcs. 
In particular, $f$ is non-decreasing if
    \begin{enumerate}[label = (CV)] 
    	\item \hspace{4cm} $\mu_a \ge \Omega \sigma_a$ for all $a \in A$. \label{asmptn:coeffofvar}
    \end{enumerate}

For the majority of the paper we will assume that \ref{asmptn:coeffofvar} holds.
In order to strengthen the continuous relaxation of \CQND for the independent case,
we utilize the extended pack inequalities of \citet{ab:packs} for $K_f$. The computational effectiveness of the pack inequalities and their extensions for the probabilistic network design problem is detailed in Section \ref{sec:Computations}.
In Section~\ref{sec:relax-assumption}, we show that the results can be applied and are useful even if assumption \ref{asmptn:coeffofvar} does not hold for all arcs.

Unfortunately supermodularity is lost once correlations are introduced to the covariance matrix $\bs \Sigma$.  In the following section we show how to exploit additional combinatorial structure when taking into account the correlations among the arc capacities.

\subsection{Correlated arc capacities}
\label{sec:corrMaster}

For a non-diagonal covariance matrix $\bs \Sigma$, the probabilistic cut capacity function
\begin{align*}
 f(\bs x) =  \bs{\mu}'\x - \Omega\sqrt{\x' \bs{\Sigma\,\x}}
\end{align*}
is not supermodular and the pack inequalities and their extensions that are valid for the diagonal case are no longer applicable.
Nevertheless, we show below that under assumption \ref{asmptn:coeffofvar}, we can utilize submodularity of a related function instead of the supermodularity of the original cut capacity function.

First, observe that, since $\Omega \ge 0$,  $f(\x) \geq d$ implies $\bs \mu'\x \geq d$.
Then, for $\x : \bs \mu'\x \geq d$, we have
\[
f(\x) \geq d \  \Longleftrightarrow  \ \Omega^2 \x' \bs \Sigma \x \le (\bs \mu' \x - d)^2.
\]
Therefore, for $\x : \bs \mu'\x \geq d$,
instead of the original conic quadratic constraint $f(\x) \ge d$,
we may use the equivalent quadratic constraint
\begin{align} \label{eq:q}
q(\x):= \Omega^2 \x' \bs \Sigma \x - (\bs \mu' \x - d)^2 \le 0,
\end{align}
which allows one to exploit submodularity. The function $q$ is the difference of two convex functions; it is neither convex, not concave.
However, as shown below, $q$ is submodular under assumption \ref{asmptn:coeffofvar}.

\begin{prop}
  \label{prop:Qsubmod}
  \citep{fishernemhausewolsey}
  Let $p: \binset^n \rightarrow \R$ be the quadratic function
  \begin{equation*}
    p(\x) \define \x'\,{\bf{Q}}\,\x,
  \end{equation*}
  where ${\bf{Q}}$ is a symmetric matrix. $p$ is submodular if $Q_{ij} \leq 0, \ 1\leq \,i < j\,\leq n.$
\end{prop}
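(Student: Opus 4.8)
The plan is to read $p$ as a set function and verify directly the diminishing-returns characterization of submodularity recalled just before the statement. For $S \subseteq N := \{1,\dots,n\}$ let $\x_S \in \binset^n$ be the characteristic vector of $S$, and set $p(S) := \x_S'\,{\bf{Q}}\,\x_S = \sum_{i,j \in S} Q_{ij}$. Because $\x$ is binary we have $x_i^2 = x_i$, so the diagonal entries $Q_{ii}$ enter linearly; this is the only place the $\binset^n$ restriction is used.

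First I would compute the marginal (difference) function $\rho_i$. For $i \notin S$, expanding $p(S \cup i)$ and collecting the terms that involve $i$, and using the symmetry ${\bf{Q}} = {\bf{Q}}'$ to pair $Q_{ij}$ with $Q_{ji}$, gives
\begin{equation*}
  \rho_i(S) = p(S \cup i) - p(S) = Q_{ii} + 2 \sum_{j \in S} Q_{ij}.
\end{equation*}
The key observation is that the diagonal term $Q_{ii}$ does not depend on $S$, so it is irrelevant to the monotonicity of $\rho_i$; only the off-diagonal entries matter. Next I would check the submodularity condition, namely that $\rho_i$ is non-increasing: $\rho_i(S) \ge \rho_i(T)$ for all $S \subseteq T \subseteq N \setminus i$. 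Subtracting the two marginals, the constant $Q_{ii}$ and the common summands cancel, leaving
\begin{equation*}
  \rho_i(S) - \rho_i(T) = -2 \sum_{j \in T \setminus S} Q_{ij} \ge 0,
\end{equation*}
where the inequality holds because every $j \in T \setminus S$ satisfies $j \neq i$, so $Q_{ij} \le 0$ by hypothesis. This is exactly the condition that $\rho_i$ is non-increasing on $N \setminus i$, i.e.\ that $p$ is submodular (equivalently, that $-p$ is supermodular in the sense defined above).

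I do not expect a genuine obstacle: the argument is a short direct computation once $p$ is viewed as a set function. The only points requiring care are the linearization $x_i^2 = x_i$ for the diagonal, the factor of $2$ produced by the symmetry $Q_{ij} = Q_{ji}$, and the cancellation of the sign-unconstrained diagonal entries in the marginal difference — which is precisely why the hypothesis on the off-diagonal entries alone suffices. If one prefers an even lighter verification, I would instead check the local (pairwise) form of submodularity, $\rho_i(S) \ge \rho_i(S \cup j)$ for distinct $i,j \notin S$; here $\rho_i(S \cup j) - \rho_i(S) = 2 Q_{ij} \le 0$, and the equivalence of the local and global conditions for set functions then completes the proof.
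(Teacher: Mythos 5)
Your proof is correct, and since the paper states this proposition with only a citation to \citet{fishernemhausewolsey} and gives no proof of its own, your direct computation of the marginal $\rho_i(S) = Q_{ii} + 2\sum_{j\in S}Q_{ij}$ and the difference $\rho_i(S)-\rho_i(T) = -2\sum_{j\in T\setminus S}Q_{ij} \ge 0$ is exactly the standard argument from the cited source. All the delicate points are handled properly: the symmetry of $\mathbf{Q}$ producing the factor $2$ (and extending the hypothesis $Q_{ij}\le 0$ for $i<j$ to all off-diagonal pairs), the cancellation of the sign-unconstrained diagonal term, and the equivalent pairwise check $\rho_i(S\cup j)-\rho_i(S)=2Q_{ij}\le 0$.
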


\begin{prop}
  \label{prop:q_submod} If \ref{asmptn:coeffofvar} holds, then
  the set function $q(\x) =  \Omega^2 \x' \bs \Sigma \x - (\bs \mu' \x - d)^2$ is submodular.
  \end{prop}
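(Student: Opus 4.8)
The plan is to reduce the claim to Proposition~\ref{prop:Qsubmod} by isolating the pure quadratic part of $q$ and checking the sign of its off-diagonal coefficients. First I would expand
\[
q(\x) = \x'\bigl(\Omega^2\bs\Sigma - \bs\mu\bs\mu'\bigr)\x + 2d\,\bs\mu'\x - d^2,
\]
using $(\bs\mu'\x)^2 = \x'(\bs\mu\bs\mu')\x$. The term $2d\,\bs\mu'\x$ is linear, hence modular, and $-d^2$ is a constant; since submodularity is preserved under addition of a modular function and is unaffected by an additive constant, $q$ is submodular if and only if the quadratic form $\x'\mathbf{Q}\x$ with $\mathbf{Q} \define \Omega^2\bs\Sigma - \bs\mu\bs\mu'$ is submodular.

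By Proposition~\ref{prop:Qsubmod} it then suffices to verify that every off-diagonal entry of the symmetric matrix $\mathbf{Q}$ is nonpositive, i.e. $Q_{ij} = \Omega^2\sigma_{ij} - \mu_i\mu_j \le 0$ for all $i \ne j$. The diagonal entries play no role here, because $x_i^2 = x_i$ on $\binset^n$ folds them into the linear, modular part and so does not affect submodularity.

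To establish $Q_{ij} \le 0$ I would combine two facts. Since $\bs\Sigma$ is a covariance matrix it is positive semidefinite, so Cauchy--Schwarz gives $\sigma_{ij} \le |\sigma_{ij}| \le \sigma_i\sigma_j$. Assumption~\ref{asmptn:coeffofvar} supplies $0 \le \Omega\sigma_a \le \mu_a$ for every $a$; in particular the means are nonnegative. Multiplying the two nonnegative inequalities $\Omega\sigma_i \le \mu_i$ and $\Omega\sigma_j \le \mu_j$ yields $\Omega^2\sigma_i\sigma_j \le \mu_i\mu_j$, so
\[
\Omega^2\sigma_{ij} \le \Omega^2\sigma_i\sigma_j \le \mu_i\mu_j,
\]
which is exactly $Q_{ij}\le 0$. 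Applying Proposition~\ref{prop:Qsubmod} then completes the argument.

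I expect the only delicate point to be the chain bounding $\Omega^2\sigma_{ij}$: it is essential that $\sigma_{ij}$ can be controlled by the product of standard deviations (via positive semidefiniteness of $\bs\Sigma$, valid \emph{regardless} of the sign of the correlation) and that assumption~\ref{asmptn:coeffofvar} makes the factors $\Omega\sigma_i$ and $\mu_i$ comparable and nonnegative, so that multiplying the two inequalities is legitimate. Everything else is bookkeeping.
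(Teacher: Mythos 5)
Your proof is correct and takes essentially the same route as the paper's: both reduce to checking that the off-diagonal quadratic coefficients $\Omega^2\sigma_{ij}-\mu_i\mu_j$ are nonpositive, establish this via $\sigma_{ij}\le\sigma_i\sigma_j$ combined with assumption~\ref{asmptn:coeffofvar}, and invoke Proposition~\ref{prop:Qsubmod}. The only cosmetic difference is that you split off the modular part $2d\,\bs\mu'\x - d^2$ explicitly, whereas the paper folds the diagonal and linear terms into coefficients $\alpha_i$ using $x_i^2 = x_i$; the arguments are otherwise identical.
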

\begin{proof}
Note that by observing $x_i = x_i^2$ for binary $\x$, $q(\x)$ can be written as
\begin{align}
\label{fn:fd}
	q(\x) = \sum_{i=1}^n \alpha_ix_i + 2\sum_{i=1}^n\sum_{j>i}^n \beta_{ij}x_ix_j - d^2,
\end{align}
where $\alpha_i = \Omega^2\sigma_i^2 + 2\mu_id - \mu_i^2$ and $\beta_{ij} = \Omega^2\sigma_{ij} - \mu_i\mu_j$.
 As $\sigma_{ij} \le \sigma_i \sigma_j$ and by assumption \ref{asmptn:coeffofvar}, we have
\[
\beta_{ij} = \Omega^2\sigma_{ij} - \mu_i\mu_j \leq \Omega^2\sigma_{i}\sigma_j - \mu_i\mu_j \leq 0.
\]
Then from Proposition~\ref{prop:Qsubmod} it follows that $q$ is submodular.
\end{proof}

Submodular pack \citep{ab:packs} and cover \cite{Atamturk2009333} inequalities and their extensions have been used to strengthen the continuous relaxations of submodular knapsack sets when the underlying set function is monotone. Unfortunately, $q$ is neither non-increasing nor non-decreasing and, therefore, these inequalities are not applicable.
Instead, here we will consider linear underestimates from the extended polymatroid associated with submodular $q$.

\begin{dfn}
  For a submodular set function $g:\binset^A \rightarrow \R$, the polyhedron
  $$
        EP_g \define \set{\bv\in \R^A}{\bv(S) \leq g(S),\,\,S\subseteq A}
  $$
  is called the extended polymatroid associated with $g$.
\end{dfn}
\vspace{1mm}
\begin{thm}
\citep{Edmonds1970submodular} For a submodular function $g$ with $g(\emptyset) = 0$, each vertex of the extended polymatroid $EP_g$ is given by
   $$
        v^{\bpi}_j = g(S^{\bpi}_j) - g(S^{\bpi}_{j-1})
   $$
   for a permutation $\bpi$ of $A$, where $S^{\bpi}_j$ denotes the set consisting of the first $j$ elements of the permutation $\bpi$.
\end{thm}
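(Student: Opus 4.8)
The theorem states that for a submodular function $g$ with $g(\emptyset)=0$, the vertices of $EP_g = \set{\bv \in \R^A}{\bv(S) \le g(S),\ S \subseteq A}$ are exactly the points $v^{\bpi}$ generated by the greedy rule $v^{\bpi}_j = g(S^{\bpi}_j) - g(S^{\bpi}_{j-1})$ over permutations $\bpi$ of $A$. The plan is to prove this in two directions: first that each greedy point $v^{\bpi}$ is a vertex of $EP_g$, and second that every vertex arises this way. I would handle the first direction via an LP-duality / optimality argument, which simultaneously does most of the work for the second.

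First I would show $v^{\bpi}$ is feasible, i.e.\ $v^{\bpi} \in EP_g$. For any $S \subseteq A$, I need $v^{\bpi}(S) \le g(S)$. Writing the elements of $S$ in the order they appear in $\bpi$, I would telescope $v^{\bpi}(S) = \sum_{j : \pi_j \in S} \big(g(S^{\bpi}_j) - g(S^{\bpi}_{j-1})\big)$ and compare term-by-term against the telescoping sum for $g(S)$ along the induced ordering of $S$. The key inequality is that each increment $g(S^{\bpi}_j) - g(S^{\bpi}_{j-1})$ over the larger chain is dominated by the corresponding increment over the smaller chain $S$, which is precisely submodularity: adding $\pi_j$ to the larger set $S^{\bpi}_{j-1}$ yields no more than adding it to the smaller subset $S \cap S^{\bpi}_{j-1}$. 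Summing these comparisons gives $v^{\bpi}(S) \le g(S)$; the case $S = A$ holds with equality since the full telescoping sum collapses to $g(A) - g(\emptyset) = g(A)$.

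Next I would establish that $v^{\bpi}$ is actually a vertex, and that it is the unique maximizer of a suitable linear objective. Consider any weight vector $\bw$ with $w_{\pi_1} > w_{\pi_2} > \cdots > w_{\pi_n}$ (strictly decreasing along $\bpi$). The greedy algorithm for maximizing $\bw'\bv$ over $EP_g$ sets the highest-weight coordinate as large as possible, then the next, and so on; I would argue that this greedy solution is exactly $v^{\bpi}$ and that it is optimal. Optimality I would verify by exhibiting a complementary dual solution: assign dual multipliers only to the nested chain of constraints $S^{\bpi}_1 \subset S^{\bpi}_2 \subset \cdots \subset S^{\bpi}_n = A$, with multiplier $w_{\pi_j} - w_{\pi_{j+1}} \ge 0$ on $S^{\bpi}_j$ (and $w_{\pi_n}$ on $A$). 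These multipliers are nonnegative by the strict ordering, they are dual-feasible (each coordinate's dual constraint is met since every element lies in a contiguous suffix of chain sets), and their objective value matches $\bw'v^{\bpi}$, so weak duality forces optimality. Since the objective can be made to pick out $v^{\bpi}$ for an open cone of weights, $v^{\bpi}$ is a vertex.

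Finally, for the reverse inclusion, I would take an arbitrary vertex $\bv^*$ of $EP_g$. A vertex is the unique optimizer of some linear objective $\bw'\bv$; by perturbing slightly I may assume the weights are distinct, inducing a permutation $\bpi$ ordering them decreasingly, and the argument above shows the greedy point $v^{\bpi}$ is an optimizer for that objective. Uniqueness of the optimizer at a vertex then forces $\bv^* = v^{\bpi}$. The main obstacle I anticipate is the careful bookkeeping in the feasibility step, where the term-by-term domination must correctly align the two telescoping chains and invoke submodularity at each step; getting the index matching right between the chain through $A$ and the chain through $S$ is the delicate part, and the submodularity inequality $\rho_{\pi_j}(S^{\bpi}_{j-1}) \le \rho_{\pi_j}(S \cap S^{\bpi}_{j-1})$ must be applied with the correct sets on each side.
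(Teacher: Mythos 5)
The paper itself gives no proof of this statement --- it is quoted directly from Edmonds (1970) with a citation only --- so there is no in-paper argument to compare against; your proposal reconstructs the standard greedy/LP-duality proof, and its outline is correct. The feasibility step is right, including the delicate alignment you flag: for $\pi_j \in S$ one uses $\rho_{\pi_j}(S^{\bpi}_{j-1}) \le \rho_{\pi_j}(S \cap S^{\bpi}_{j-1})$ (valid since $S \cap S^{\bpi}_{j-1} \subseteq S^{\bpi}_{j-1}$ and $g$ is submodular), and the right-hand sides telescope exactly to $g(S)$. One point to tighten in the duality step: since $EP_g$ is closed under decreasing any coordinate, its recession cone contains $-\R^A_+$, so $\max\,\mathbf{w}'\bv$ over $EP_g$ is bounded only for $\mathbf{w} \ge \mathbf{0}$; your chain multipliers $w_{\pi_j}-w_{\pi_{j+1}}$ are nonnegative by the strict ordering, but the multiplier $w_{\pi_n}$ on $A$ needs $w_{\pi_n} \ge 0$, which does not follow from the ordering alone --- you should require $\mathbf{w}$ strictly positive and decreasing. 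Correspondingly, in the converse direction the perturbation to distinct weights must stay inside the normal cone at the vertex; this is harmless because that cone is full-dimensional at a vertex and contained in $\R^A_+$ (by the recession-cone observation), so its interior consists of strictly positive vectors, and small perturbations preserve unique optimality. With that repaired, complementary slackness with strictly positive multipliers on the nested chain forces $v(S^{\bpi}_j) = g(S^{\bpi}_j)$ for all $j$, which pins down the unique maximizer as $v^{\bpi}$, completing both directions.
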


Let $\mathbf{V}_g$ be the set of all extreme points of $EP_g$ and
    consider the discrete epigraph of the set function $g$
    $$
        B_g \define \set{(\x, z) \in \binset^A\times \R}{g(x)\leq z}.
    $$
    It follows from polarity that any inequality
     $$\bv' \x \leq z, \,\,\bv \in EP_g$$ is valid for $B_g$ \citep[e.g.][]{AN:conicobj}. 
Moreover, inequalities $\bv' \x \leq z, \,\,\bv \in EP_g$ are sufficient to define  $\conv(B_g)$ \citep{Edmonds1970submodular}.
Therefore, the discrete lower level set of $g$ for a fixed value of $z$ can be stated as follows.     

\begin{prop}
\label{cor:extpolymat2}
  For $\bv \in \mathbf{V}_g$ and fixed $\gamma \in \R$, let $K_{\bv} \define \set{\x \in \binset^n}{\bv' \x \leq \gamma}$.  Then
  $$K_{g} \define \set{\x \in \binset^n}{g(\x) \leq \gamma} = \underset{\bv\inset \mathbf{V}_g}{\bigcap}\, K_{\bv}.$$
\end{prop}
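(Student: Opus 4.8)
The plan is to reduce the claimed set equality to the single pointwise identity
\[
g(\x) \;=\; \max_{\bv \in \mathbf{V}_g} \bv' \x \qquad \text{for all } \x \in \binset^n,
\]
after which the proposition drops out of a chain of equivalences: $\x \in K_g$ iff $g(\x) \le \gamma$ iff $\max_{\bv \in \mathbf{V}_g} \bv'\x \le \gamma$ iff $\bv'\x \le \gamma$ for every $\bv \in \mathbf{V}_g$ iff $\x \in \bigcap_{\bv \in \mathbf{V}_g} K_{\bv}$. So essentially all the content lies in verifying that maximizing the linear form $\bv'\x$ over the finitely many vertices of $EP_g$ recovers the submodular value $g(\x)$ at every binary point; I would establish this as two matching inequalities.

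The inclusion $K_g \subseteq \bigcap_{\bv} K_{\bv}$, equivalently $\max_{\bv \in \mathbf{V}_g}\bv'\x \le g(\x)$, is immediate from the validity already recorded above: every inequality $\bv'\x \le z$ with $\bv \in EP_g$ is valid for the discrete epigraph $B_g$, so taking $z = g(\x)$ (legitimate since $(\x, g(\x)) \in B_g$) yields $\bv'\x \le g(\x)$ for all $\bv \in EP_g$, and in particular for all vertices. Hence $g(\x) \le \gamma$ forces $\bv'\x \le \gamma$ for each $\bv \in \mathbf{V}_g$.

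The reverse inclusion is the substantive step and reduces to the matching inequality $\max_{\bv \in \mathbf{V}_g}\bv'\x \ge g(\x)$, i.e.\ exhibiting, for each $\x$, one vertex attaining $g(\x)$. Here I would invoke Edmonds' vertex characterization stated above. Writing $S$ for the support of $\x$ with $|S| = k$, I would pick a permutation $\bpi$ of $A$ that lists the elements of $S$ in its first $k$ positions; for the associated vertex $\bv^{\bpi}$ with coordinates $v^{\bpi}_j = g(S^{\bpi}_j) - g(S^{\bpi}_{j-1})$, only the first $k$ coordinates meet the support of $\x$, and the sum telescopes:
\[
(\bv^{\bpi})'\x \;=\; \sum_{j=1}^{k}\bigl(g(S^{\bpi}_j) - g(S^{\bpi}_{j-1})\bigr) \;=\; g(S^{\bpi}_k) - g(S^{\bpi}_0) \;=\; g(S) - g(\emptyset) \;=\; g(\x),
\]
using $S^{\bpi}_k = S$ and the normalization $g(\emptyset) = 0$ under which the vertex characterization holds. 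Thus if $\bv'\x \le \gamma$ for all vertices, then in particular $g(\x) = (\bv^{\bpi})'\x \le \gamma$, giving $\x \in K_g$ and completing the reverse inclusion.

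I expect the main obstacle to be conceptual rather than computational: recognizing that the greedy vertex indexed by a permutation ordering the support of $\x$ first is exactly the one whose telescoping difference collapses to $g(\x)$ --- the discrete shadow of the polymatroid greedy algorithm. A minor technical point worth checking is that the supremum over $EP_g$ is attained at a vertex at all; this holds because the finite ground set makes the recession cone of $EP_g$ the nonpositive orthant, so $EP_g$ is pointed and the maximum of $\bv'\x$ (with $\x \ge 0$) is finite and attained at an extreme point. No structural property of the particular $q$ in \eqref{eq:q} is used; submodularity of $g$ together with $g(\emptyset)=0$ suffice.
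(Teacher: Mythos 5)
Your proof is correct, and in substance it follows the same route as the paper, but in a more self-contained form. The paper dispatches the proposition in one line: validity of $\bv'\x \leq z$ for the epigraph $B_g$ gives $K_g \subseteq \bigcap_{\bv} K_{\bv}$, and the reverse inclusion is read off from the cited result of Edmonds that these inequalities suffice to define $\conv(B_g)$. You instead prove the underlying pointwise identity $g(\x) = \max_{\bv \in \mathbf{V}_g} \bv'\x$ directly, exhibiting the attaining vertex via a permutation listing the support of $\x$ first and telescoping; this is exactly the greedy construction by which Edmonds' sufficiency theorem is proved, so nothing new is smuggled in, but your route needs only the vertex formula quoted in the paper rather than the full convex-hull description --- a genuine gain in elementarity. (For the easy direction you could dispense with polarity altogether: $\bv'\x = \bv(S) \leq g(S) = g(\x)$ is immediate from the definition of $EP_g$ with $S$ the support of $\x$.) Two side remarks: your attainment-at-a-vertex discussion is superfluous in your own argument, since $\mathbf{V}_g$ is finite and you construct the maximizer explicitly; and your insistence on the normalization $g(\emptyset)=0$ is a point the paper glosses over --- the function $q$ of \eqref{eq:q}, to which the proposition is applied, has $q(\emptyset) = -d^2 \neq 0$, so strictly one applies the proposition to $q - q(\emptyset)$ with $\gamma$ shifted accordingly, a harmless constant shift preserving submodularity and the level set. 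One pedantic caveat: the theorem as quoted states that every vertex of $EP_g$ is a greedy point, whereas you use the converse, that the greedy point $\bv^{\bpi}$ you build lies in $\mathbf{V}_g$; this converse is also part of Edmonds' result and standard, but it is worth noting that it is the direction actually needed.
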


 Proposition~\ref{cor:extpolymat2} allows us to linearize the submodular capacity constraint \eqref{eq:q} using inequalities from the vertices of the extended polymatroid $EP_q$. Although the number of such inequalities is factorial in $|A|$, we can use a separation algorithm to generate violated ones on the fly.
 Given a point $\bar \x$, we solve
 $$
 \max \{ \bar \x' \bv: \bv \in \mathbf{V}_q \}
 $$
exactly to find the strongest polymatroid knapsack inequality $\bv' \x \leq \gamma$ using the greedy algorithm of \citet{Edmonds1970submodular}.
Note that each $K_{\bv}$ is a $0-1$ knapsack set, for which
a large class of valid inequalities are known. To strengthen the formulation further, we generate cover inequalities for $K_{\bv}$ and
lift them using the superadditive lifting functions \cite{Atamturk2005}. As an additional step, we aggregate multiple polymatroid knapsack inequalities and
generate cover inequalities for the aggregated knapsack set and lift them in the same way. We present our computational analysis of the aforementioned approach in Section \ref{sec:Computations}.

\section{Separation}
\label{sec:Separation}

As \CQND \ contains exponentially many probabilistic capacity constraints, we utilize a cut generation algorithm to add only a small number of such constraints that are necessary. We let the initial relaxation of the problem to be the nominal version using only the mean capacities by ignoring the uncertainties in the capacity. This relaxation can be solved as a standard network design problem. Then we iteratively add violated probabilistic capacity constraints until no such violated inequality exists.

In order to find violated constraints for a given solution $\bar \x$ to a relaxation of the problem,
one needs to utilize a separation approach; that is,
to find a cut $C$ of the network
for which the corresponding probabilistic capacity constraint is violated:
\begin{align}
\label{eq:cut-sep}
 \bs{\mu}_C' \bar \x_C - \Omega\sqrt{\bar \x_C' {\bs \Sigma}_C\, \bar {\x}_C} < d.
\end{align}

We now formalize the separation problem for the probabilistic capacity constraints.
Let diag$(\bar \x)$ be the diagonalization of the vector $\bar \x$ and
${\bs \mu}_{\bar{\x}} = \text{diag}(\bar \x) \bs \mu$ and
$\boldsymbol{\Sigma_{\bar{\x}}} = \text{diag}(\bar \x) \bs \Sigma \text{diag}(\bar \x)$.
Further, let the binary variable $z_{ij}$ be 1 if arc $(ij) \in A$ is in the chosen cut and 0 otherwise;
and $w_i$ be 1 if node $i$ is on the source ($s$) side and 0 if it is on the sink ($t$) side.
Then, a cut $C$ achieving the smallest left-hand-side for inequality \eqref{eq:cut-sep} can be found by
solving the following nonlinear min-cut problem:

\begin{optprog}
  min & \objective{\Theta(\z) = {\bs \mu}_{\bar{\x}}'\mathbf{z}- \Omega\sqrt{\mathbf{z}'\boldsymbol{\Sigma_{\bar{\x}}}\mathbf{z}}} \nonumber\\
  s.t. & w_i -w_j &\leq& z_{ij},  & \,(ij) \in {{A}}\label{cons:1}\\
                    & w_i &\geq& z_{ij},  &\,(ij) \in {{A}} \label{cons:2}\\
  (\textbf{SEP}) \quad   & w_j &\leq& 1-z_{ij},  &\,(ij) \in {{A}} \label{cons:3}\\
                    & w_s &=& 1\label{cons:4}\\
                    & w_t &=& 0\label{cons:5}\\
                    & w_i,w_j &\geq& 0,  &\,(ij) \in {{A}}\label{cons:6}\\
                    & z_{ij} &\in& \{0,1\},   &\,(ij) \in {{A}}\label{cons:7}.
\end{optprog}
Note that $\Theta(\z)$ is a concave function as $\Omega \ge 0$ and $\boldsymbol{\Sigma_{\bar{\x}}}$ is positive semidefinite.
Constraints \eqref{cons:1}--\eqref{cons:3} ensure that only the arcs from the source ($s$) set to the sink ($t$) set are included in the chosen cut. Notice that for the usual (deterministic) min-cut problem with $\Omega$ = 0, as the objective is nonnegative, inequality \eqref{cons:1} is sufficient. However, one needs  constraints \eqref{cons:2} and \eqref{cons:3} to ensure the correctness of the formulation with $\Omega > 0$. Observe that (\textbf{SEP}) is \NP-hard, since for ${\bs \mu} = \bf{0}$ and diagonal $\bs{\Sigma}$ it reduces to the max-cut problem.

In order to have an effective method for solving \CQND, it is imperative to be able generate violated probabilistic capacity inequalities fast.
In the following subsections we discuss solving the separation problem (\textbf{SEP}) for the independent capacities and correlated capacities
separately as the correlations change the structure of the separation problem significantly.

\subsection{Independent arc capacities}
When the arc capacities are independent, the covariance matrix is diagonal; therefore, the separation problem reduces to
\begin{optprog*}
  min & \objective{\Theta(\z) = {\bs \mu}_{\bar{\x}}'\mathbf{z}
  - \Omega\sqrt{\sum_{a \in A}{\sigma_a^2}\bar{x}_a^2\,\,z_a }} \nonumber\\[-2mm]
  (\textbf{SEP-D}) \ \ \  \\[-2mm]
   & \quad \text{s.t.} & \eqref{cons:1} - \eqref{cons:7}.
\end{optprog*}

In this case the objective function $\Theta$ is convex, moreover, a non-decreasing supermodular function. Nevertheless, minimizing the set function $\Theta$ over binaries is \NP-hard \cite{Ahmed2011}. \citet{fishernemhausewolsey} show that any supermodular function minimization problem can be formulated using linear inequalities:
\begin{align}
    \min w & \ \ \text{s.t.} \nonumber \\
	w & \geq \Theta(S) - \sum_{i\in S} \rho_i(A\sm i)(1-z_i) + \sum_{i \in N\sm S} \rho_i(S)z_i,\,\, \forall S \subseteq A \label{ineq:submod1}\\
	w & \geq \Theta(S) - \sum_{i\in S} \rho_i(S\sm i)(1-z_i) + \sum_{i \in N\sm S} \rho_i(\emptyset)z_i,\,\, \forall S \subseteq A \label{ineq:submod2}
\end{align}
However, the linear mixed 0--1 formulation with inequalities \eqref{ineq:submod1}--\eqref{ineq:submod2} is
computationally ineffective as its convex relaxation is usually weak \citep{Ahmed2011}.

Alternatively, (\textbf{SEP-D}) can be formulated as a quadratically-constrained mixed 0--1 optimization problem by
bringing the nonlinear term in the objective into the constraints and squaring it:
\begin{optprog}
  min & \objective{\Theta(\z,t) = {\bs \mu}_{\bar{\x}}'\mathbf{z} - \Omega t} \nonumber \\
  (\textbf{SEP-QCQP})\quad s.t. & t^2 &\leq& \,{\sum_{a \in A}{\sigma_a^2}\bar{x}_a^2\,\,z_a} & \label{cons:t-sq}\\
  					& \eqref{cons:1} &-& \eqref{cons:7}. \nonumber
\end{optprog}

Note that \eqref{cons:t-sq} is a convex constraint with only single quadratic term and can be readily handled by commercial solvers. Table~\ref{tab:MIQP} shows a comparison of the solution of the two formulations with \solver{CPLEX} version 12.6 on a $2.93$GHz Pentium Linux workstation with $8$GB main memory. Each row shows the average number of branch-and-bound nodes till optimality and the time to the first feasible solution and the time to proving  optimality (in seconds) for five instances with varying network sizes and $\Omega$. Observe that (\textbf{SEP-QCQP}) is solved to optimality much faster than the linearization of (\textbf{SEP-D}) with \eqref{ineq:submod1}--\eqref{ineq:submod2}. For all most all instances it requires no branching at all. Furthermore, in all cases feasible solutions are found much earlier. Therefore, we use formulation (\textbf{SEP-QCQP}) for the independent case in the computational experiments presented in Section~\ref{sec:Computations}.

\begin{table}[h!]
	\centering
	\caption{Separation formulations: independent case.}
	\resizebox{0.8\textwidth}{!}{%
	\begin{tabular}{ccc|ccc|ccc}
    \hline \hline 
    \multirow{2}{*}{nodes} & \multirow{2}{*}{arcs} & \multirow{2}{*}{$\Omega$} & \multicolumn{3}{c|}{ SEP-D with ineqs. \eqref{ineq:submod1}--\eqref{ineq:submod2} } & \multicolumn{3}{c}{SEP-QCQP} \\ \cline{4-9}
          &       &       & nodes & feasible & optimal & nodes & feasible & optimal \\
    \hline 
    \multirow{3}{*}{50} & \multirow{3}{*}{1225} & 1 & 15 & 0.4 & 0.9 & 0  & 0.1  & 0.4  \\
          &       & 3   & 18  & 0.8 & 1.1 & 0   & 0.2  & 0.4 \\
          &       & 5   & 21  & 0.8 & 1.5   & 0   & 0.2  & 0.4   \\
    \hline 
    \multirow{3}{*}{70} & \multirow{3}{*}{2415} & 1  & 12  & 1.8 & 3.2 & 0  &   0.9 & 1.4  \\
          &       & 3     & 17 & 2.9  & 4.4 & 0  & 0.6  & 1.4   \\
          &       & 5     & 30 & 5.1 & 6.4 & 0  & 0.9  & 1.4   \\
    \hline 
    \multirow{3}{*}{100} & \multirow{3}{*}{4950} & 1 & 14  & 9.1 & 16.9    & 0   & 2.7 & 5.3  \\
          &       & 3      & 25  & 12.9 & 20.3 & 0  & 2.9 & 5.6  \\
          &       & 5    & 41  & 18.4 & 26.6 & 0  & 2.4  & 5.6  \\
          \hline 
          \multicolumn{3}{c|}{\textbf{avg}} & \textbf{21.4} & \textbf{5.8} & \textbf{9.0} & \textbf{0} & \textbf{1.2} & \textbf{2.4}\\
                   \multicolumn{3}{c|}{\textbf{stdev}} & \textbf{9.3} & \textbf{6.4} & \textbf{9.7} & \textbf{0} & \textbf{1.1} & \textbf{2.3}\\
    \hline \hline 
    \end{tabular}}%
    \label{tab:MIQP}	
\end{table}

\subsection{Correlated arc capacities}

For the general case with correlated arc capacities, as the covariance matrix is not diagonal, the objective of the separation problem is no longer supermodular nor can it
be formulated as (\textbf{SEP-QCQP}) due to the bilinear terms. To overcome the difficulty, we present two approaches. The first one is the usual McCormick linearization of the bilinear terms, whereas the second one is a two-step approach that allows one to convert the separation problem into one with a quadratic constraint on binary variables.

\textit{McCormick linearization}

 The classical approach of \citet{McCormick76} to solving non-convex quadratic optimization problems is to linearize the bilinear terms using additional variables and constraints. In particular, a bilinear term $z = x_1x_2$ is re-formulated with the \emph{McCormick inequalities}:
\begin{align}
\label{ineq:mccormick}
  z \geq \max\,(x_1+x_2-1, 0),\hspace{0.5cm} z \leq \min\,(x_1,x_2).
\end{align}

Now consider the objective function of the separation problem (\textbf{SEP}):
\begin{align} \label{eq:theta-orig}
   \Theta(\z) = {\bs \mu}_{\bar{\x}}'\mathbf{z}- \Omega\sqrt{\mathbf{z}'\boldsymbol{\Sigma_{\bar{\x}}}\mathbf{z}} = {\bs \mu}_{\bar{\x}}'\mathbf{z}- \Omega\sqrt{\sum_{i}(\sigma_i \bar x_i)^2z_i^2 + 2\sum_i\sum_{j>i} \sigma_{ij} \bar{x}_{i} \bar{x}_{j}  z_iz_j}.
\end{align}
Introducing a new variable $y_{ij}$ for each distinct pair $\{i,j\}$, we formulate the separation problem as
\begin{optprog}
min & \objective{\Theta(\z, \y, t ) = {\bs \mu}_{\bar{\x}}'\mathbf{z}- \Omega t} \nonumber \\
s.t. &  \objective{ t^2 \le \sum_{i}(\sigma_i \bar x_i)^2z_i + 2\sum_i\sum_{j>i} \sigma_{ij} \bar{x}_i \bar x_j y_{ij} } \label{cons:mc0} \\
(\textbf{SEP-MC}) \ \ \ & z_i + z_j &\leq & y_{ij} + 1, & \,\,i, j \in {A} \label{cons:mc1}\\
  & y_{ij} &\leq & z_i,  & \,i, j \in {A}\label{cons:mc2}\\
  & y_{ij} &\leq &  z_j,  & \,i, j \in {A}\label{cons:mc3}\\
  & z_i &\in &  \{0,1\},  & \,i \in {A}\label{cons:mc4}\\
  & \eqref{cons:1} &-& \eqref{cons:7}. \nonumber
\end{optprog}

Observe that while \eqref{eq:theta-orig} is a concave function in $z$, \eqref{cons:mc0} is a convex constraint with a single quadratic term, and, thus,
(\textbf{SEP-MC}) can be readily solved by conic quadratic MIP solvers. Also note that
constraints \eqref{cons:mc1}--\eqref{cons:mc4} implicitly restrict $y_{ij}$ to be binary. However,
 a challenge with this McCormick linearization is that it often yields a weak relaxation of the original formulation. Moreover, the large number (quadratic in $|A|$) of auxiliary variables and constraints introduced for the linearization makes it difficult to scale up the dimension of the problem.

\textit{A quadratic reformulation}

In this section we give an alternative reformulation of the separation problem that does not require a large number of auxiliary variables. In particular, we reformulate the separation problem with correlations using a quadratic set function as in Section \ref{sec:corrMaster}.

Note that for a given $\bar \x$, the cut corresponding to a feasible solution $\z$ to ({\textbf{SEP}) is satisfied if and only if
\begin{align}
   {\bs \mu}_{\bar{\x}}'\mathbf{z}- \Omega\sqrt{\mathbf{z}'\boldsymbol{\Sigma_{\bar{\x}}}\mathbf{z}} \ge d.
\end{align}
Assuming ${\bs \mu}_{\bar{\x}}'\mathbf{z} \ge d$, this is equivalent to
\begin{align}
   \big ({\bs \mu}_{\bar{\x}}'\mathbf{z}- d \big)^2 \ge \Omega^2 \mathbf{z}'\boldsymbol{\Sigma_{\bar{\x}}}\mathbf{z}.
\end{align}

Observe that it is easy to satisfy the condition  ${\bs \mu}_{\bar{\x}}'\mathbf{z} \ge d$ as it corresponds to solving a standard min-cut problem. Therefore, in the first stage, we ensure that $\bar \x$ satisfies that standard (linear) cut capacity constraints ${\bs \mu}' \x \ge d$ and then, in the second stage, we look for a violated probabilistic capacity constraint by solving
\begin{optprog}
      minimize & \objective{{\bs \mu}_{\bar{\x}}'\mathbf{z}}\nonumber\\
      (\textbf{SEP-BQC})\quad \quad s.t. & ({\bs \mu}_{\bar{\x}}'\mathbf{z} - d)^2 &\leq& \Omega^2 \cdot \mathbf{z}'\boldsymbol{\Sigma_{\bar{\x}}}\mathbf{z} - \epsilon \label{cons:qcp}\\
                        & \eqref{cons:1} - \eqref{cons:7} & &\nonumber
\end{optprog}
for a sufficiently small $\epsilon > 0$. Note that any feasible solution to (\textbf{SEP-BQC}) corresponds to a violated probabilistic cut constraint.
As \eqref{cons:qcp} is a quadratic constraint on binary variables, it can be readily solved by commercial MIP solvers. Observe that (\textbf{SEP-BQC}) is a much smaller formulation than (\textbf{SEP-MC}).

In the following, we compare the computational performance for three solution approaches:
\begin{enumerate}
	\item (\textbf{SEP-MC}) with full McCormick linearization
	\item (\textbf{SEP-BQC}) with partial McCormick linearization of only the non-convex right-hand-side of constraint \eqref{cons:qcp}.
	\item (\textbf{SEP-BQC}) with no linearization.
\end{enumerate}

Each row of Table~\ref{tab:QCP} shows the average for five randomly generated instances with varying network size and $\Omega$. We compare the number of nodes explored in the search tree and the time (in seconds) to the first feasible solution and optimality.

\begin{table}[h!]
  \centering
  \caption{Separation formulations: correlated case.}%
  \resizebox{\textwidth}{!}{%
    \begin{tabular}{ccc|ccc|ccc|ccc}
    \hline \hline 
    \multirow{2}{*}{nodes} & \multirow{2}{*}{arcs} & \multirow{2}{*}{$\Omega$} & \multicolumn{3}{c|}{Full McCormick} & \multicolumn{3}{c|}{Partial McCormick} & \multicolumn{3}{c}{{SEP-BQC}} \\ \cline{4-12}
          &       &       & nodes & feasible & optimal & nodes & feasible & optimal & nodes & feasible & optimal \\
    \hline 
          &       & 1     & 2 & 1.4 & 3.6   & 5 & 1.1 & 5 & 		0 & 0  & 0.2 \\[1mm]
    20    & 105   & 3     & 0 & 1.8 & 4.3   & 14 & 3.1 & 10.9 & 		0 & 0.1  & 0.2 \\[1mm]
          &       & 5     & 2 & 1.7 & 4.2     & 31 & 4.4 & 12.7 &		1 & 0.1  & 0.2 \\
    \hline 
          &       & 1     & 11 & 42.1  & 54.6   & 32 & 22.9 & 61.2 &		5 & 0.2   & 0.6 \\[1mm]
    30    & 231   & 3     & 17 & 51.7 & 95.2    & 366 & 54.5 & 88.6 & 		4 & 0.6   & 0.6 \\[1mm]
          &       & 5     & 20 & 61.6 & 122.1   & 1093 & 36.6 & 190 & 		3 & 1.0   & 5.4 \\
    \hline 
          &       & 1     & 36 & 65.2  & 676.7    & 39 & 188.9 & 283 & 		1 & 4.3     & 5.3 \\[1mm]
    40    & 432   & 3     & 51 & 66.7  & 816.3    & 150 & 446.3 & 498 & 		2 & 2.0     & 9.8 \\[1mm]
          &       & 5     & 25 & 72.8  & 736.5    & 291 & 447.1 & 628 &  	5  & 		3.9	& 11.2 \\
     \hline 
          \multicolumn{3}{c|}{\textbf{avg}} & \textbf{18.2} & \textbf{40.6} & \textbf{279.3} & \textbf{224.6} & \textbf{133.9} & \textbf{197.6} & \textbf{2.3} & \textbf{1.0}  & \textbf{3.7} \\
                    \multicolumn{3}{c|}{\textbf{stdev}} & \textbf{17.1} & \textbf{30.5} & \textbf{352.1} & \textbf{351} & \textbf{186.5} & \textbf{229.1} & \textbf{2.0} & \textbf{1.5} & \textbf{4.4}\\
  \hline \hline 
    \end{tabular}}%
  \label{tab:QCP}%
\end{table}%

It is evident from the computations displayed in Table \ref{tab:QCP} that (\textbf{SEP-BQC}) reformulation leads to a much better computational performance compared to the partial or full McCormick linearizations. As the separation problem is called many times during the solution of the probabilistic network design problem, shorter separation times can make a significant impact on the overall solution times. Therefore, we use formulation (\textbf{SEP-BQC}) for the separation for the correlated case.

\section{Computations}
\label{sec:Computations}
In this section we present our computational experiments for solving the network design problem with probabilistic capacities.
We compare the impact of the strengthening methods discussed in Section~\ref{sec:strong} in the computational performance.
We utilize the solution approaches for the separation problems discussed in Section~\ref{sec:Separation}.
For the computational experiments we use the MIP solver of \solver{CPLEX} version 12.6 that solves conic quadratic relaxations at the nodes of the branch-and-bound tree. \solver{CPLEX} heuristics are turned off, and a single thread is used. The MIP search strategy is set to the traditional branch-and-bound method as it is not possible to add users cuts in \solver{CPLEX} with the dynamic search strategy. Since the solution approaches are significantly different for the independent and correlated cases, we address them separately. For the case of independent capacities the time limit and the memory limit are set to $1800$ secs. and $500$ MB, respectively, whereas for the harder correlated case, the limits are $3600$ secs. and $1$ GB. The experiments are performed on a $2.93$GHz Pentium Linux workstation with $8$GB main memory.

\subsection{Data generation}
\label{sec:data}
We report the results of the computational experiments for varying number of nodes ($n$) and arcs ($a$), values of $\Omega$, and demand. For each combination, five random networks are generated with mean arc capacities drawn from uniform $[0, 100]$ and $\sigma_i$ from uniform $\left[0, {\mu_i}/{\Omega}\right]$. The demand $d = \beta\cdot \phi$, where $\phi$ is the maximum flow possible through the network for the given value of $\Omega$.
So that the networks are not completely dense, we set the probability of having an arc between two nodes as ${1}/{\sqrt{n}}$, while ensuring the connectedness of the network. In the case of correlated capacities, for each problem instance, the covariance matrix is generated using MATLAB moler matrix library which generates random symmetric positive definite matrices. The eigenvalues of each generated covariance matrix are then translated by half the spectral radius to reduce the condition number. The mean arc capacities in the correlated case are generated from uniform $[\Omega\,\sigma_i,\,2\,\Omega\,\sigma_i]$. In Section \ref{sec:relax-assumption}, we drop the assumption \ref{asmptn:coeffofvar} on the coefficient of variation and the mean capacities are generated from uniform $[0,\,2\,\Omega\,\sigma_i]$. The data set is available for download at \texttt{http://ieor.berkeley.edu/$\sim$atamturk/data/conic.network.design} .

\subsection{Independent arc capacities}
In Table \ref{table:networkFinal} we compare the root relaxation gap (\texttt{rgap}), the numbers of cuts generated (\texttt{cuts}), the number of nodes explored \texttt{(nodes)}, the CPU time in seconds (\texttt{time}) and the number of instances solved to optimality (\texttt{\#}) with four cut generation options. The \texttt{rgap} is computed as $(z_r - z_o)/{z_o}$, where $z_r$ denotes the objective value at the root node and $z_o$ denotes the optimal objective value. If none of the algorithms solve a given instance to optimality within the computation limits, then $z_o$ is the objective value of the best found solution across all algorithms. Furthermore, if an algorithm is unable to solve all of the five instances, we provide the end gap (\texttt{egap}), which is computed as $(z_e - z_o)/{z_o}$, where $z_e$ is the best objective value found with the algorithm. Each entry in the table presents the average for five instances, except for the \texttt{\#[egap]} column, where the \texttt{egap} is the average for the unsolved instances.

 The columns under the heading `conic cuts' show the performance with the \solver{CPLEX} barrier algorithm when the conic quadratic constraints \eqref{eq:equivalence} added to the formulation through the separation routine. We compare it with two linearizations: the first one is the CPLEX outer approximation, where CPLEX automatically generates gradients cuts and solves linear programs iteratively instead of conic quadratic programs with a barrier algorithm; whereas the second one is our implementation of the gradient cuts. Both linearizations perform better than using the barrier algorithm. CPLEX generates additional cutting planes to these linear formulations to improve the relaxations. Note that our implementation of the gradient cuts is more aggressive and generates significantly more cuts and reduces the root gaps and the number of nodes substantially; however, it is not as efficient as the native CPLEX implementation and takes longer time. Therefore, for the rest of the computations, we use the CPLEX outer approximation. The last set of columns under the heading `extended packs' show the performance with the addition of extended pack inequalities \citep{ab:packs}. The use of extended pack cuts strengthens the formulations substantially by reducing the average root gap to 12\% and leads to much shorter solution times.

\begin{sidewaystable}[p]
\vskip 10cm
  \caption{\small Network design: independent case.}
    \resizebox{\textwidth}{!}{%
    \renewcommand{\arraystretch}{1.2}
    \begin{tabular}{cccc|ccccc|ccccc|ccccc|ccccc}
    \hline \hline 
          &       &       &       & \multicolumn{5}{c}{Conic cuts}        & \multicolumn{5}{c}{CPLEX outer approximation}  & \multicolumn{5}{c}{User outer approximation} & \multicolumn{5}{c}{Extended packs}\\
   \hline 
    nodes & arcs  & $\beta$  & $\Omega$ & rgap  & cuts  & nodes & time  & \# [egap]  & rgap  & cuts  & nodes & time  & \# [egap]    & rgap  & cuts  & nodes & time  & \# [egap]    & rgap  & cuts  & nodes & time  & \# [egap] \\
 \hline 
    \multirow{9}{*}{10} & \multirow{9}{*}{18} & \multirow{3}{*}{0.3} & 1     & {66.2} & 3     & {123} & {1} & {5} & {66.2} & 3     & {82} & {1} & {5} & {60.42} & 87    & {24} & {76} & {5} & {0} & 7     & {0} & {4} & {5} \\
          &       &       & 3     & {68.5} & 7     & {394} & {1} & {5} & {68.5} & 7     & {226} & {3} & {5} & {64.9} & 89    & {21} & {71} & {5} & {12.0} & 11    & {1} & {6} & {5} \\
          &       &       & 5     & {60.4} & 4     & {349} & {1} & {5} & {60.4} & 4     & {163} & {2} & {5} & {53.4} & 106   & {19} & {80} & {5} & {1.3} & 9     & {0} & {3} & {5} \\
          &       & \multirow{3}{*}{0.5} & 1     & {54.4} & 5     & {264} & {1} & {5} & {49.0} & 5     & {158} & {0} & {5} & {46.4} & 77    & {20} & {62} & {5} & {20.5} & 9     & {1} & {5} & {5} \\
          &       &       & 3     & {59.7} & 6     & {408} & {3} & {5} & {59.7} & 6     & {161} & {4} & {5} & {51.5} & 66    & {19} & {60} & {5} & {0.2} & 11    & {0} & {6} & {5} \\
          &       &       & 5     & {44} & 8     & {1817} & {8} & {5} & {44} & 8     & {1187} & {2} & {5} & {39.0} & 125   & {31} & {94} & {5} & {2.3} & 10    & {1} & {7} & {5} \\
          &       & \multirow{3}{*}{0.7} & 1     & {42.2} & 6     & {386} & {1} & {5} & {40.9} & 6     & {278} & {2} & {5} & {38.9} & 82    & {18} & {68} & {5} & {3.0} & 11    & {1} & {4} & {5} \\
          &       &       & 3     & {39.0} & 3     & {394} & {1} & {5} & {41.4} & 3     & {206} & {0} & {5} & {32.6} & 134   & {27} & {101} & {5} & {9.6} & 9     & {3} & {3} & {5} \\
          &       &       & 5     & {56.9} & 11    & {1251} & {6} & {5} & {53.1} & 11    & {876} & {5} & {5} & {52.3} & 113   & {23} & {88} & {5} & {0.8} & 12    & {1} & {4} & {5} \\
  \hline 
    \multirow{9}{*}{20} & \multirow{9}{*}{54} & \multirow{3}{*}{0.3} & 1     & {70.8} & 21    & {28146} & {628} & {4 [26.1]} & {68.7} & 34    & {16424} & {16} & {5} & {60.9} & 836   & {81} & {398} & {5} & {8.0} & 102   & {6} & {8} & {5} \\
          &       &       & 3     & {68.5} & 27    & {19620} & {591} & {5} & {68.5} & 29    & {3570} & {5} & {5} & {61.9} & 562   & {61} & {285} & {5} & {1.5} & 46    & {2} & {12} & {5} \\
          &       &       & 5     & {67.3} & 23    & {19950} & {773} & {3 [25.3]} & {67.2} & 28    & {3236} & {6} & {5} & {51.6} & 469   & {42} & {237} & {5} & {7.2} & 60    & {3} & {12} & {5} \\
          &       & \multirow{3}{*}{0.5} & 1     & {56.5} & 16    & {17876} & {639} & {4 [19.3]} & {52.0} & 32    & {22242} & {26} & {5} & {44.3} & 815   & {97} & {368} & {5} & {9.2} & 164   & {13} & {36} & {5} \\
          &       &       & 3     & {61.1} & 26    & {29287} & {887} & {3 [23.2]} & {56.9} & 53    & {51336} & {54} & {5} & {53.1} & 763   & {93} & {315} & {5} & {11.1} & 141   & {17} & {20} & {5} \\
          &       &       & 5     & {59.5} & 22    & {21371} & {760} & {4 [18.6]} & {56.0} & 40    & {25241} & {25} & {5} & {50.5} & 1124  & {120} & {490} & {5} & {13.3} & 298   & {30} & {28} & {5} \\
          &       & \multirow{3}{*}{0.7} & 1     & {51.8} & 21    & {38152} & {769} & {3 [22.1]} & {47.5} & 43    & {49404} & {49} & {5} & {41.7} & 801   & {122} & {416} & {5} & {8.7} & 150   & {13} & {27} & {5} \\
          &       &       & 3     & {51.4} & 24    & {33386} & {761} & {4 [20.2]} & {46.7} & 36    & {32124} & {22} & {5} & {45.1} & 673   & {102} & {326} & {5} & {7.6} & 125   & {12} & {26} & {5} \\
          &       &       & 5     & {51.3} & 22    & {53640} & {762} & {3 [20.8]} & {44.6} & 46    & {47598} & {40} & {5} & {38.2} & 579   & {93} & {300} & {5} & {9.3} & 80    & {6} & {10} & {5} \\
   \hline 
    \multirow{9}{*}{40} & \multirow{9}{*}{153} & \multirow{3}{*}{0.3} & 1     & {64.7} & 18    & {46877} & {1450} & {2 [25.5]} & {58.63} & 122   & {297893} & {624} & {4 [19.7]} & {42.0} & 2558  & {127} & {1729} & {1 [15.2]} & {14.5} & 819   & {46} & {66} & {5} \\
          &       &       & 3     & {66.6} & 17    & {46226} & {1440} & {2 [27.2]} & {60.3} & 162  & {457997} & {936} & {3 [20.3]} & {48.3} & 1938  & {119} & {1323} & {2 [15.9]} & {14.8} & 1230  & {69} & {97} & {5} \\
          &       &       & 5     & {68.3} & 16    & {49432} & {1473} & {1 [31.2]} & {59.7} & 138   & {586878} & {1118} & {2 [20.8} & {52.4} & 2391  & {124} & {1629} & {1 [16.4]} & {21.7} & 737   & {47} & {53} & {5} \\
          &       & \multirow{3}{*}{0.5} & 1     & {58.7} & 18    & {87071} & {1800} & {1 [27.6]} & {48.1} & 142   & {898215} & {1222} & {4 [18.7]} & {37.8} & 2526  & {138} & {1602} & {2 [10.7]} & {20.5} & 1458  & {89} & {131} & {5} \\
          &       &       & 3     & {62.3} & 17    & {82869} & {1800} & {1 [29.2]} & {54.5} & 179   & {1008149} & {1603} & {3 [19.1]} & {44.7} & 2608  & {143} & {1740} & {1 [15.1]} & {27.9} & 1485  & {169} & {177} & {5} \\
          &       &       & 5     & {61.2} & 16    & {59570} & {1800} & {0 [29.8]} & {51.3} & 178   & {906390} & {1581} & {2 [20.9]} & {41.9} & 2845  & {189} & {1946} & {1 [14.8]} & {24.4} & 2155  & {247} & {424} & {4 [4.3]} \\
          &       & \multirow{3}{*}{0.7} & 1     & {52.6} & 16    & {64761} & {1800} & {2 [25.6]} & {42.2} & 148   & {806376} & {1184} & {4 [17.1]} & {36.2} & 2378  & {129} & {1639} & {1 [11.4]} & {27.1} & 2357  & {227} & {428} & {4 [5.2]} \\
          &       &       & 3     & {57.8} & 16    & {69720} & {1800} & {1 [27.2]} & {42.7} & 121   & {998075} & {1125} & {3 [18.0]} & {35.6} & 2169  & {121} & {1398} & {2 [10.5]} & {26.4} & 2304  & {204} & {436} & {4 [4.5]} \\
          &       &       & 5     & {55.7} & 15    & {83062} & {1800} & {0 [30.3]} & {45.1} & 152   & {772617} & {1023} & {3 [18.6]} & {35.31} & 2281  & {175} & {1515} & {3 [10.5]} & {24.9} & 2201  & {254} & {432} & {4 [3.8]} \\
   \hline 
          \multicolumn{4}{c|}{\textbf{avg}} & \textbf{58.4} & \textbf{15} & \textbf{31719} & \textbf{806} &  & \textbf{53.8} & \textbf{64} & \textbf{258782} & \textbf{396} & & \textbf{46.7} & \textbf{1081} & \textbf{84} & \textbf{670} & & \textbf{12.1} & \textbf{593} & \textbf{54} & \textbf{91} & \\
          \multicolumn{4}{c|}{\textbf{stdev}} & \textbf{8.3} & \textbf{7} & \textbf{29239} & \textbf{710} &  & \textbf{9.1} & \textbf{64} & \textbf{378896} & \textbf{574} &  & \textbf{8.9} & \textbf{1011} & \textbf{53} & \textbf{691} &  & \textbf{8.9} & \textbf{835} & \textbf{85} & \textbf{150} & \\
  \hline \hline 
    \end{tabular}}%
  \label{table:networkFinal}%
\end{sidewaystable}%

\subsection{Correlated arc capacities}

In Table~\ref{tab:networkcorr} we compare the performance with conic cuts, CPLEX outer aproximation as well as with adding the
extended polymatroid cuts, and the aggregated lifted cover cuts to the formulations. As in the independent case, solving iterative linear programs via CPLEX outer approximation is substantially faster compared to solving conic the quadratic programs with a barrier algorithm in the search tree.  The user cuts are generated throughout the search tree using the respective separation algorithms on top of the CPLEX outer approximation. Table~ \ref{tab:networkcorr} clearly shows the positive impact of strengthening the formulations with the
extended polymatroid cuts and the aggregated lifted cover cuts. All instances are solved to optimality with the addition of user cuts and the average solution time across all instances reduces from 1587 seconds to merely 4 seconds.
Note that the separation problems for network instances in Table~\ref{tab:networkcorr} are solved much faster than the ones in Table~\ref{tab:QCP} because (a) instances in Table~\ref{tab:networkcorr} are smaller and (b) the separation problems are solved only on subsets of the arcs with positive flow.

\begin{sidewaystable}[p]
\vskip 10cm
  \caption{\small Network design: correlated case.}
    \resizebox{\textwidth}{!}{%
    \renewcommand{\arraystretch}{1.3}
    \begin{tabular}{cccc|ccccc|ccccc|ccccc|cccccc}
    \hline \hline
          &       &       &       & \multicolumn{5}{c|}{Conic cuts}             & \multicolumn{5}{c|}{CPLEX outer approximation}    & \multicolumn{5}{c|}{Extended polymatroid cuts}      & \multicolumn{6}{c}{Extended polymatroid \& aggregated lifted cover cuts} \\
    \hline \midrule
    nodes & arcs  & $\beta$  & $\Omega$ & rgap  & cuts  & nodes & time  & \# [egap]    & rgap  & cuts  & nodes & time & \# [egap]    & rgap  & cuts  & nodes & time & \# [egap]    & rgap  & cuts & covers & nodes & time  & \# [egap] \\
    \hline 
        \multirow{9}{*}{10} & \multirow{9}{*}{21} & \multirow{3}{*}{0.3} & 1     & {84.1} & 6     & {474} & {2.2} & {5 } & {84.1} & 6     & {232} & {0.1} & {5 } & {80.4} & 18    & {67} & {0.1} & {5} & {64.2} & 16    & 9     & {33} & {0.1} & {5} \\
          &       &       & 3     & {79.1} & 6     & {675} & {3.4} & {5 } & {79.1} & 5     & {280} & {0.1} & {5 } & {77.4} & 16    & {71} & {0.1} & {5} & {61.2} & 14    & 10    & {27} & {0.1} & {5} \\
          &       &       & 5     & {82.7} & 4     & {616} & {1.7} & {5 } & {82.7} & 4     & {202} & {0.1} & {5 } & {81.4} & 33    & {89} & {0.1} & {5} & {62.6} & 20    & 9     & {37} & {0.1} & {5} \\
       &       & \multirow{3}{*}{0.5} & 1     & {72.8} & 7     & {468} & {2.6} & {5 } & {72.8} & 7     & {314} & {0.1} & {5 } & {66.1} & 43    & {104} & {0.1} & {5} & {50.2} & 39    & 17    & {58} & {0.1} & {5} \\
          &       &       & 3     & {79.1} & 7     & {1314} & {5.5} & {5 } & {79.1} & 7     & {430} & {0.1} & {5 } & {74.2} & 30    & {119} & {0.3} & {5} & {59} & 30    & 16    & {62} & {0.1} & {5} \\
          &       &       & 5     & {73.9} & 8     & {901} & {10.5} & {5 } & {73.9} & 8     & {427} & {0.2} & {5 } & {70.2} & 54    & {174} & {0.2} & {5} & {48} & 39    & 21    & {70} & {0.2} & {5} \\
       &       & \multirow{3}{*}{0.7} & 1     & {68.3} & 7     & {1523} & {7.6} & {5 } & {68.3} & 7     & {721} & {0.2} & {5 } & {66.8} & 44    & {190} & {0.22} & {5} & {45.3} & 56    & 25    & {84} & {0.2} & {5} \\
          &       &       & 3     & {80} & 11    & {2670} & {20.4} & {5 } & {80} & 11    & {855} & {0.3} & {5 } & {73.5} & 53    & {204} & {0.4} & {5} & {52.6} & 52    & 26    & {98} & {0.2} & {5} \\
          &       &       & 5     & {76.1} & 12    & {5614} & {97.8} & {5 } & {76.1} & 12    & {2125} & {0.8} & {5 } & {70.4} & 98    & {359} & {0.3} & {5} & {49.3} & 101   & 42    & {141} & {0.2} & {5} \\
    \hline
    \multirow{9}{*}{20} & \multirow{9}{*}{54} & \multirow{3}{*}{0.3} & 1     & {73.1} & 29    & {25429} & {1327.9} & {4 [24]} & {72.2} & 11    & {5671} & {3.5} & {5 } & {72} & 279   & {1834} & {1.3} & {5} & {55} & 208   & 141   & {248} & {0.8} & {5} \\
          &       &       & 3     & {73.8} & 16    & {7690} & {238.7} & {5 } & {73.8} & 16    & {3837} & {2.2} & {5 } & {72} & 164   & {792} & {0.7} & {5} & {55.4} & 154   & 121   & {256} & {0.6} & {5} \\
          &       &       & 5     & {72.4} & 10    & {4283} & {81.5} & {5 } & {72.4} & 10    & {2784} & {1.2} & {5 } & {70.7} & 118   & {395} & {0.5} & {5} & {48.3} & 90    & 62    & {155} & {0.4} & {5} \\
       &       & \multirow{3}{*}{0.5} & 1     & {67.3} & 51    & {59031} & {2176.4} & {2 [9.9]} & {69.5} & 29    & {41528} & {33.7} & {5 } & {65.3} & 423   & {4758} & {3.2} & {5} & {46} & 466   & 200   & {1132} & {2.5} & {5} \\
          &       &       & 3     & {70.4} & 41    & {36523} & {1712.4} & {4 [4.1]} & {70.5} & 31    & {14067} & {8.4} & {5 } & {66.9} & 418   & {1793} & {1.7} & {5} & {49.4} & 344   & 167   & {531} & {1.5} & {5} \\
          &       &       & 5     & {68.6} & 84    & {34812} & {1583.5} & {3 [6.6]} & {70.4} & 50    & {129362} & {727.3} & {4 [11.2]} & {68.5} & 258   & {1074} & {1} & {5} & {50.6} & 190   & 120   & {310} & {0.8} & {5} \\
       &       & \multirow{3}{*}{0.7} & 1     & {60.7} & 25    & {38103} & {1254.5} & {4 [13.3]} & {62.2} & 33    & {46481} & {32.5} & {5 } & {62.1} & 719   & {8237} & {10.8} & {5} & {43.4} & 397   & 197   & {1270} & {2.6} & {5} \\
          &       &       & 3     & {67.3} & 66    & {102159} & {3600} & {0 [9.7]} & {68.4} & 44    & {28176} & {19.6} & {5 } & {65.5} & 352   & {2192} & {2.1} & {5} & {46.3} & 238   & 134   & {647} & {1.2} & {5} \\
          &       &       & 5     & {67} & 48    & {47892} & {1698.7} & {4 [14.9]} & {67.7} & 38    & {24963} & {24.3} & {5 } & {65.7} & 346   & {2658} & {1.9} & {5} & {47} & 393   & 224   & {828} & {2.4} & {5} \\
    \hline
    \multirow{9}{*}{30} & \multirow{9}{*}{101} & \multirow{3}{*}{0.3} & 1     & {68.1} & 46    & {32337} & {2854.5} & {2 [18.4]} & {71.3} & 68    & {56284} & {171.4} & {5 } & {70.6} & 231   & {2304} & {1.7} & {5} & {51} & 387   & 222   & {672} & {2.9} & {5} \\
          &       &       & 3     & {66.6} & 54    & {28538} & {2586.2} & {2 [23.1]} & {72.2} & 108   & {137584} & {844.8} & {4 } & {67.9} & 507   & {5620} & {5.2} & {5} & {50} & 397   & 246   & {814} & {3.2} & {5} \\
          &       &       & 5     & {67.6} & 100   & {33951} & {2735.2} & {2 [21.2]} & {71.6} & 87    & {77529} & {385.8} & {5 } & {69.3} & 305   & {2582} & {2.2} & {5} & {49.4} & 191   & 139   & {460} & {1.5} & {5} \\
   &       & \multirow{3}{*}{0.5} & 1     & {57.6} & 90    & {109833} & {3600} & {0 [18.8]} & {65.7} & 106   & {541575} & {2219.5} & {3 [4.0]} & {64.1} & 1244  & {24314} & {24.2} & {5} & {42} & 937   & 566   & {2596} & {9.5} & {5} \\
          &       &       & 3     & {59.6} & 21    & {65734} & {2850.7} & {2 [16.7]} & {64.3} & 83    & {330311} & {1612.2} & {4 [8.1]} & {60.9} & 1297  & {14424} & {15.9} & {5} & {42.8} & 1155  & 721   & {2999} & {14.7} & {5} \\
          &       &       & 5     & {60.8} & 40    & {133861} & {3600} & {0 [12.3]} & {65.2} & 98    & {262583} & {1123.6} & {5 } & {61.5} & 963   & {5186} & {5.5} & {5} & {39.8} & 866   & 479   & {1242} & {6.3} & {5} \\
         &       & \multirow{3}{*}{0.7} & 1     & {44.4} & 67    & {83056} & {3600} & {0 [30.7]} & {59.3} & 108   & {644090} & {2892.9} & {2 [9.2]} & {61.5} & 1790  & {51076} & {63.1} & {5} & {36.6} & 1686  & 962   & {4759} & {28.8} & {5} \\
          &       &       & 3     & {56.9} & 63    & {90960} & {3600} & {0 [10.8]} & {62.7} & 63    & {540993} & {1811.8} & {3 [22.4]} & {62.2} & 1977  & {49073} & {81.9} & {5} & {42} & 2168  & 1118  & {3411} & {27} & {5} \\
          &       &       & 5     & {51} & 83    & {76914} & {3600} & {0 [13.5]} & {56.6} & 91    & {314039} & {984.8} & {4 [6.1]} & {57} & 1175  & {13124} & {15.7} & {5} & {41.2} & 1379  & 630   & {2022} & {11.3} & {5} \\
    \hline
    \multicolumn{4}{c|}{\textbf{avg}} & \textbf{68.5} & \textbf{37} & \textbf{37976} & \textbf{1587} &  & \textbf{70.8} & \textbf{42} & \textbf{118794} & \textbf{478} &  & \textbf{68.3} & \textbf{480} & \textbf{7141} & \textbf{9} &  & \textbf{49.2} & \textbf{445} & \textbf{245} & \textbf{925} & \textbf{4} & \\
    \multicolumn{4}{c|}{\textbf{stdev}} & \textbf{9.5} & \textbf{30} & \textbf{39441} & \textbf{1476} & & \textbf{6.8} & \textbf{38} & \textbf{190837} & \textbf{801} & & \textbf{5.9} & \textbf{560} & \textbf{13578} & \textbf{19} &  & \textbf{5.8} & \textbf{563} & \textbf{302} & \textbf{1216} & \textbf{8} & \\
  \hline \hline 
    \end{tabular}}%
  \label{tab:networkcorr}%
\end{sidewaystable}%

\subsection{Relaxing the assumption on the coefficient of variation}
\label{sec:relax-assumption}

Throughout the paper, we assumed an upper bound on the coefficient of variation \ref{asmptn:coeffofvar}. While assumption~\ref{asmptn:coeffofvar} is a sufficient condition leading to the submodularity of the set functions used in the analysis, we show in this section, that it is possible to drop this assumption and still utilize the results where applicable.

Recall from Section~\ref{sec:corrMaster} that if \ref{asmptn:coeffofvar} holds, the coefficients $\beta_{ij}$ of the function
\begin{align*}
	q(\x) = \sum_i \alpha_ix_i +  \sum_i\sum_{j>i} 2 \beta_{ij}x_ix_j + d^2,
\end{align*}
are non-positive, and consequently, $q$ is submodular. We can drop this assumption by using the McCormick linearization for pairs $(i,j)$ where $\beta_{ij} > 0$. Introducing $z_{ij} = x_ix_j$ for such pairs only, consider the function
$$
	\tilde q(\x,\mathbf{z}) = \sum_i \alpha_ix_i + \sum_{ ij:j>i,\beta_{ij} < 0} 2\beta_{ij}x_ix_j +  \sum_{ij:j>i, \beta_{ij} > 0} 2 \beta_{ij}z_{ij} + d^2.
$$
As $\tilde q$ is submodular, we can employ the results using $\tilde q$ instead of $q$.

In Table \ref{tab:networkgeneral} we present the computational results for the general probabilistic network design instances without the  assumption \ref{asmptn:coeffofvar}.
In this data set, 29\% of the coefficients $\beta_{ij}$ are positive.
The results are largely consistent with Table~\ref{tab:networkcorr} with slight degradation in the performance. The cutting planes added reduce the average solution time from 2334 seconds to merely 10 seconds and confirms their effectiveness for the general case as well.

\begin{sidewaystable}[htbp]
\vskip 10cm
  \caption{\small Network design: the general case.}
    \resizebox{\textwidth}{!}{%
    \renewcommand{\arraystretch}{1.3}
    \begin{tabular}{cccc|ccccc|ccccc|ccccc|cccccc}
  \hline \hline 
          &       &       &       & \multicolumn{5}{c|}{Conic Cuts}             & \multicolumn{5}{c|}{CPLEX Outer Approximation}    & \multicolumn{5}{c|}{Extended Polymatroids}      & \multicolumn{6}{c}{Extended Polymatroids with Aggregated Covers} \\
    \hline
    nodes & arcs  & $\beta$  & $\Omega$ & rgap  & cuts  & nodes & time  & \# [egap]    & rgap  & cuts  & nodes & time & \# [egap]    & rgap  & cuts  & nodes & time & \# [egap]    & rgap  & cuts & covers & nodes & time  & \# [egap] \\
    \hline
	\multirow{9}{*}{10} & \multirow{9}{*}{26} & \multirow{3}{*}{0.3} & 1     & {59.2} & 13    & {413} & {4.7} & {5 } & {59.5} & 13    & {276} & {0.1} & {5 } & {59} & 11    & {56} & {0 } & {5} & {43.4} & 8     & 4     & {22} & {0} & {5} \\
          &       &       & 3     & {55.9} & 11    & {866} & {52.7} & {5} & {57.5} & 11    & {204} & {0.1} & {5 } & {56.2} & 13    & {98} & {0.1} & {5} & {40.2} & 17    & 10    & {41} & {0} & {5} \\
          &       &       & 5     & {52.8} & 9     & {363} & {16.1} & {5} & {54.2} & 9     & {164} & {0.1} & {5 } & {53.4} & 52    & {47} & {0.1} & {5} & {37.4} & 63    & 29    & {19} & {0} & {5} \\
          &       & \multirow{3}{*}{0.5} & 1     & {57.3} & 7     & {631} & {28.1} & {5 } & {57.5} & 7     & {60} & {0.1} & {5 } & {57.5} & 19    & {68} & {0.1} & {5} & {39.7} & 22    & 9     & {33} & {0} & {5} \\
          &       &       & 3     & {51.4} & 10    & {933} & {78.3} & {5} & {52.6} & 10    & {168} & {0.1} & {5 } & {52.1} & 29    & {103} & {0.1} & {5} & {33.7} & 26    & 13    & {66} & {0.1} & {5} \\
          &       &       & 5     & {48.2} & 8     & {854} & {43.5} & {5} & {49.1} & 8     & {95} & {0} & {5 } & {48.8} & 32    & {91} & {0.1} & {5} & {31.1} & 38    & 20    & {48} & {0.1} & {5} \\
          &       & \multirow{3}{*}{0.7} & 1     & {53.8} & 6     & {816} & {51.6} & {5 } & {54.1} & 6     & {171} & {0.1} & {5 } & {53.6} & 36    & {99} & {0.1} & {5} & {37.2} & 30    & 15    & {39} & {0} & {5} \\
          &       &       & 3     & {48.9} & 9     & {485} & {14.6} & {5 } & {48.8} & 9     & {337} & {0.7} & {5 } & {49.2} & 60    & {65} & {0.1} & {5} & {29.7} & 78    & 42    & {36} & {0} & {5} \\
          &       &       & 5     & {44.6} & 7     & {1432} & {128.4} & {5 } & {44.1} & 7     & {300} & {0.5} & {5 } & {44} & 37    & {161} & {0.2} & {5} & {25.2} & 51    & 25    & {71} & {0.1} & {5} \\
    \hline
    \multirow{9}{*}{20} & \multirow{9}{*}{56} & \multirow{3}{*}{0.3} & 1     & {47.4} & 67    & {55692} & {2582} & {3 [2.5]} & {48.6} & 80    & {17546} & {13.3} & {5 } & {47.6} & 577   & {6453} & {5.9} & {5} & {32} & 692   & 350   & {1021} & {3.8} & {5} \\
          &       &       & 3     & {43.5} & 60    & {111141} & {3287} & {2 [14.2]} & {44.3} & 65    & {12956} & {10.2} & {5 } & {44.1} & 682   & {11769} & {16.1} & {5} & {24.5} & 697   & 325   & {1497} & {4.7} & {5} \\
          &       &       & 5     & {47.2} & 81    & {68984} & {2803} & {3 [8.0]} & {47} & 67    & {8234} & {7.5} & {5 } & {47.5} & 749   & {7314} & {6.2} & {5} & {30.3} & 616   & 275   & {937} & {3.9} & {5} \\
          &       & \multirow{3}{*}{0.5} & 1     & {45.6} & 80    & {101183} & {3527} & {1 [12.7]} & {45.6} & 54    & {159690} & {1251.3} & {3 [7.5]} & {45.3} & 310   & {18110} & {19.2} & {5} & {25.7} & 250   & 136   & {1541} & {5.0} & {5} \\
          &       &       & 3     & {44.2} & 62    & {141726} & {3600} & {0 [19.9]} & {44.1} & 58    & {196375} & {2101.1} & {3 [16.8]} & {44.2} & 259   & {23994} & {24.1} & {5} & {28.3} & 373   & 174   & {2313} & {8.6} & {5} \\
          &       &       & 5     & {49.8} & 72    & {107933} & {3600} & {0 [28]} & {50} & 46    & {186240} & {1874.2} & {3 [16.3]} & {49.8} & 753   & {20107} & {20.8} & {5} & {32.9} & 850   & 458   & {2579} & {9.47} & {5} \\
          &       & \multirow{3}{*}{0.7} & 1     & {58.3} & 56    & {124710} & {3600} & {0 [43.3]} & {58.6} & 71    & {156478} & {1441.6} & {3 [39.2]} & {58.8} & 725   & {20302} & {22.4} & {5} & {40.7} & 482   & 274   & {1875} & {5.84} & {5} \\
          &       &       & 3     & {47.1} & 38    & {104174} & {3600} & {0 [16.3]} & {48.2} & 83    & {24724} & {15.2} & {5 } & {48} & 569   & {11657} & {15.07} & {5} & {28.4} & 606   & 305   & {1654} & {5.8} & {5} \\
          &       &       & 5     & {49.6} & 80    & {126946} & {3600} & {0 [19.9]} & {50.9} & 40    & {11811} & {10.1} & {5 } & {50.7} & 640   & {13289} & {18.2} & {5} & {33.3} & 818   & 342   & {997} & {4.1} & {5} \\
    \hline
    \multirow{9}{*}{30} & \multirow{9}{*}{134} & \multirow{3}{*}{0.3} & 1     & {68.9} & 109   & {106396} & {3600} & {0 [43.7]} & {68.6} & 188   & {578013} & {3496.2} & {1 [9.7]} & {68.3} & 3346  & {39550} & {42.7} & {5} & {49.6} & 3180  & 1652  & {4101} & {21.7} & {5} \\
          &       &       & 3     & {70.1} & 124   & {190983} & {3600} & {0 [50.5]} & {70.8} & 157   & {483580} & {3284.1} & {2 [4.5]} & {70} & 3220  & {43285} & {49.7} & {5} & {50.1} & 3068  & 1353  & {4638} & {26.3} & {5} \\
          &       &       & 5     & {65.3} & 141   & {174411} & {3600} & {0 [26.7]} & {67.1} & 142   & {532954} & {3600} & {0 [25.1]} & {66.3} & 2265  & {44096} & {39.3} & {5} & {48.4} & 2292  & 1373  & {3855} & {19.7} & {5} \\
          &       & \multirow{3}{*}{0.5} & 1     & {89.7} & 146   & {73266} & {3600} & {0 [81]} & {89} & 154   & {591382} & {3600} & {0 [50.6]} & {89.3} & 3145  & {36896} & {84.9} & {5} & {74} & 2470  & 1013  & {3192} & {15.7} & {5} \\
          &       &       & 3     & {68.2} & 118   & {180719} & {3600} & {0 [38.1]} & {68.9} & 144   & {637194} & {3600} & {0 [24.8]} & {69} & 2767  & {49934} & {80.3} & {5} & {50.1} & 3152  & 1526  & {5215} & {33.9} & {5} \\
          &       &       & 5     & {65.5} & 136   & {184461} & {3600} & {0 [19.5]} & {66.2} & 121   & {581235} & {3600} & {0 [31.1]} & {66.2} & 2531  & {47512} & {77.2} & {5} & {49.3} & 1535  & 858   & {4827} & {29.9} & {5} \\
          &       & \multirow{3}{*}{0.7} & 1     & {72.3} & 124   & {110874} & {3600} & {0 [45.7]} & {72.4} & 145   & {628966} & {3600} & {0 [44.3]} & {72.1} & 2243  & {42543} & {66.7} & {5} & {52.1} & 2800  & 1204  & {3977} & {20.9} & {5} \\
          &       &       & 3     & {63.3} & 97    & {169244} & {3600} & {0 [24.5]} & {63} & 110   & {514648} & {3600} & {0 [15.5]} & {63.1} & 2240  & {42657} & {69.1} & {5} & {48} & 2933  & 1302  & {4022} & {21.8} & {5} \\
          &       &       & 5     & {87.8} & 110   & {186748} & {3600} & {0 [70.5]} & {87.7} & 187   & {513009} & {3600} & {0 [33.4]} & {88.1} & 2820  & {44332} & {91.8} & {5} & {48.9} & 1627  & 661   & {4691} & {29.6} & {5} \\
    \hline
    \multicolumn{4}{c|}{\textbf{avg}}   & \textbf{57.6}  & \textbf{66}    & \textbf{86162} & \textbf{2334}  &    & \textbf{58.1}  & \textbf{74}    & \textbf{216178} & \textbf{1434}  & & \textbf{57.9}     & \textbf{1116}  & \textbf{19429} & \textbf{28}    &  & \textbf{39.4}     & \textbf{1066}  & \textbf{509}   & \textbf{1974}  & \textbf{10}    &  \\
    \multicolumn{4}{c|}{\textbf{stdev}}     & \textbf{12.6}  &   \textbf{49}    &     \textbf{70804}  &    \textbf{1666}   &       & \textbf{12.4}  &    \textbf{61}   &   \textbf{258096}    &    \textbf{1639}   &       & \textbf{12.4}     &    \textbf{1215}   &   \textbf{18773}    &    \textbf{31}   &       & \textbf{11.3}     &  \textbf{1163}     &   \textbf{554}    &  \textbf{1844}     &    \textbf{11}   &  \\
  \hline \hline 
    \end{tabular}}%
  \label{tab:networkgeneral}%
\end{sidewaystable}%

\subsection{Large scale instances}

Finally we test how the proposed approach scales for large scale network design instances derived from 1990 U.S. Census described in \citet{daskin-book}. The data set is available at \texttt{\url{http://umich.edu/~msdaskin/files/sitation-class-2013.zip}}. We use the two largest data sets consisting of 88 and 150 cities in the United Stated. The arc costs are set proportional to the great circle distances between the cities as available in the data. The remaining parameters are generated as described in Section~\ref{sec:data} with a general covariance matrix.
These instances are run on a six-core Intel Xeon 3.6GHz computer with 32GB main memory with a two hour time limit. The results summarized in 
Table~\ref{tab:daskin} show that none of the instances are solved well by default CPLEX. On the other hand, with the addition of the 
cuts all of the 88-city instances and 26 of the 45 150-city instances are solved to optimality. For the remaining 19 instances the average gap between the lower and upper bounds at termination (egap) is 65.6\% lower 
compared to the default CPLEX.

\begin{table}[htbp]
  \centering
  \caption{\small Network design: Computations on Daskin Data Set.}
    \resizebox{\textwidth}{!}{%
    \renewcommand{\arraystretch}{1.2}
    \begin{tabular}{ccc|ccccc|cccccc}
    \hline \hline
          &       &       & \multicolumn{5}{c|}{CPLEX}            & \multicolumn{6}{c}{Polymatroids and Aggregated Covers} \\
    \hline
    nodes & arcs  & $\Omega$ & rgap  & cuts  & nodes & time  & \# [egap] & rgap  & cuts & covers & nodes & time  & \# [egap] \\
    \hline
    \multirow{9}{*}{88} & \multirow{3}{*}{277} & 1     & 86.2  & 33    & 55694 & 7200  & 0 [50.7] & 28.5  & 5115  & 1639  & 5691  & 1391  & 5 \\
          &       & 3     & 82.8  & 30    & 52219 & 7200  & 0 [48.3] & 26.2  & 5702  & 1912  & 5828  & 1554  & 5 \\
          &       & 5     & 80.1  & 26    & 57557 & 7200  & 0 [43.2] & 25.4  & 7439  & 1522  & 6021  & 1573  & 5 \\
          & \multirow{3}{*}{478} & 1     & 81.2  & 27    & 90284 & 7200  & 0 [41.4] & 35.5  & 12108 & 4559  & 8782  & 2884  & 5 \\
          &       & 3     & 80.8  & 30    & 64326 & 7200  & 0 [38.4] & 30.2  & 13005 & 4921  & 8426  & 2931  & 5 \\
          &       & 5     & 77.1  & 26    & 67357 & 7200  & 0 [41.2] & 28.4  & 13881 & 5022  & 9021  & 2695  & 5 \\
          & \multirow{3}{*}{890} & 1     & 72.6  & 23    & 71154 & 7200  & 0 [38.4] & 43.7  & 17862 & 6311  & 15933 & 5021  & 5 \\
          &       & 3     & 70.2  & 21    & 82498 & 7200  & 0 [35.1] & 41.9  & 17183 & 5993  & 15381 & 5392  & 5 \\
          &       & 5     & 68.8  & 26    & 80676 & 7200  & 0 [31.3] & 41.2  & 18336 & 6334  & 16017 & 5211  & 5 \\
    \hline
    \multirow{9}{*}{150} & \multirow{3}{*}{612} & 1     & 87.3  & 27    & 60795 & 7200  & 0 [39.8] & 40.1  & 15512 & 5512  & 13024 & 3706  & 5 \\
          &       & 3     & 83.7  & 29    & 81748 & 7200  & 0 [36.3] & 39.6  & 16067 & 5701  & 13922 & 3919  & 5 \\
          &       & 5     & 81.1  & 28    & 84404 & 7200  & 0 [34.4] & 36.2  & 15482 & 5521  & 13433 & 4092  & 5 \\
          & \multirow{3}{*}{1062} & 1     & 84.2  & 22    & 92482 & 7200  & 0 [39.2] & 49.1  & 26823 & 7812  & 31093 & 6126  & 4 [9.1] \\
          &       & 3     & 81.6  & 23    & 94588 & 7200  & 0 [36.4] & 46.6  & 30067 & 8108  & 33911 & 6319  & 4 [8.4] \\
          &       & 5     & 80.1  & 27    & 150708 & 7200  & 0 [34.5] & 45.8  & 31482 & 8791  & 34224 & 6892  & 3 [10.9] \\
          & \multirow{3}{*}{1945} & 1     & 87.3  & 22    & 510154 & 7200  & 0 [42.3] & 58.4  & 48513 & 12819 & 58913 & 7200  & 0 [21.6] \\
          &       & 3     & 89.1  & 26    & 824985 & 7200  & 0 [46.8] & 60.8  & 50226 & 13088 & 61891 & 7200  & 0 [19.8] \\
          &       & 5     & 87.1  & 21    & 880676 & 7200  & 0 [47.3] & 57.7  & 48943 & 12947 & 60551 & 7200  & 0 [25.1] \\
    \hline
    \multicolumn{3}{c|}{\textbf{avg}} & \multicolumn{1}{c}{\textbf{81.2}} & \multicolumn{1}{c}{\textbf{25.9}} & \multicolumn{1}{c}{\textbf{189017}} & \multicolumn{1}{c}{\textbf{7200}} &       & \multicolumn{1}{c}{\textbf{40.8}} & \multicolumn{1}{c}{\textbf{21874.8}} & \multicolumn{1}{c}{\textbf{6584}} & \multicolumn{1}{c}{\textbf{22892.3}} & \multicolumn{1}{c}{\textbf{4517}} &  \\
    \multicolumn{3}{c|}{\textbf{stdev}} & \multicolumn{1}{c}{\textbf{5.9}} & \multicolumn{1}{c}{\textbf{3.4}} & \multicolumn{1}{c}{\textbf{262949}} & \multicolumn{1}{c}{\textbf{0}} &       & \multicolumn{1}{c}{\textbf{10.9}} & \multicolumn{1}{c}{\textbf{14520.7}} & \multicolumn{1}{c}{\textbf{3557.2}} & \multicolumn{1}{c}{\textbf{19471.8}} & \multicolumn{1}{c}{\textbf{2050.7}} &  \\
    \hline \hline
    \end{tabular}}%
\label{tab:daskin}
\end{table}%

\section{Conclusion}
In this paper we consider single-commodity network design with probabilistic arc capacities. The problem is modeled with an exponential number of probabilistic capacity constraints. We study the independent and correlated cases separately as they lead to significantly different constraint sets and separation problems. In both cases, we observe that exploiting the underlying submodularity and supermodularity arising with the probabilistic constraints provides significant advantages over the classical approaches.

We have not considered joint-probabilistic constraints across all cuts of the network, which leads to a non-convex feasible set even for the continuous relaxation of the problem.
We have not considered multiple commodities either. For the multicommodity case, projecting out flow variables requires the use of metric inequalities on the design variables. We leave probabilistic metric constraints as a future research topic.

\section*{Acknowledgement}
This research has been supported, in part, by grant FA9550-10-1-0168 from the Office of the Assistant Secretary of Defense for Research and Engineering.

\bibliographystyle{abbrvnat}
\bibliography{NetworkDesign}
\begin{appendices}
	\titleformat{\section}{\large\bfseries}{\appendixname~\thesection .}{0.5em}{}
	\section{Simulation Details}
	\label{app:1}

	\subsection{Network data}
	\label{app:data}

For the example in Section~\ref{sec:intro}, we use a
six-node acyclic complete network with demand 230 $s$ to $t$ is 230.
Mean capacities, variances and costs for the arcs are presented in Table \ref{tab:1}.

\begin{table}[h!]
\caption{Candidate arcs and data.}
\centering
	\begin{tabular}{c|c|c|c|c|c}
		\hline \hline
		edge\# & from & to & mean capacity & variance & cost\\
		\hline
		1 & s & 1 & 81 & 16 & 14\\
		2 & s& 2& 90 & 676 & 42\\
		3 & s& 3& 12 & 4 & 91\\
		4 & s& 4& 91 & 289 & 79\\
		5 & s& t& 63 & 9 & 95\\
		6 & 1& 2& 9 & 4 & 65\\
		7 & 1& 3& 27 & 25 & 3\\
		8 & 1& 4& 54 &36 & 84\\
		9 & 1 & t& 95 & 256 & 93\\
		10 & 2 & 3 & 96 & 169 & 67\\
		11 & 2 & 4 & 15 & 1 & 75\\
		12 & 2 & t & 97 & 64 & 74\\
		13 & 3 & 4 & 95 & 16 & 39\\
		14 & 3 & t & 48 & 9 & 65\\
		15 & 4 & t & 80 & 49 & 17\\
	\hline \hline
	\end{tabular}
\label{tab:1}
\end{table}

\newpage

	\subsection{Minimum cost network configurations for different service levels}
	\label{app:conf}
	
	\vspace{5mm}
	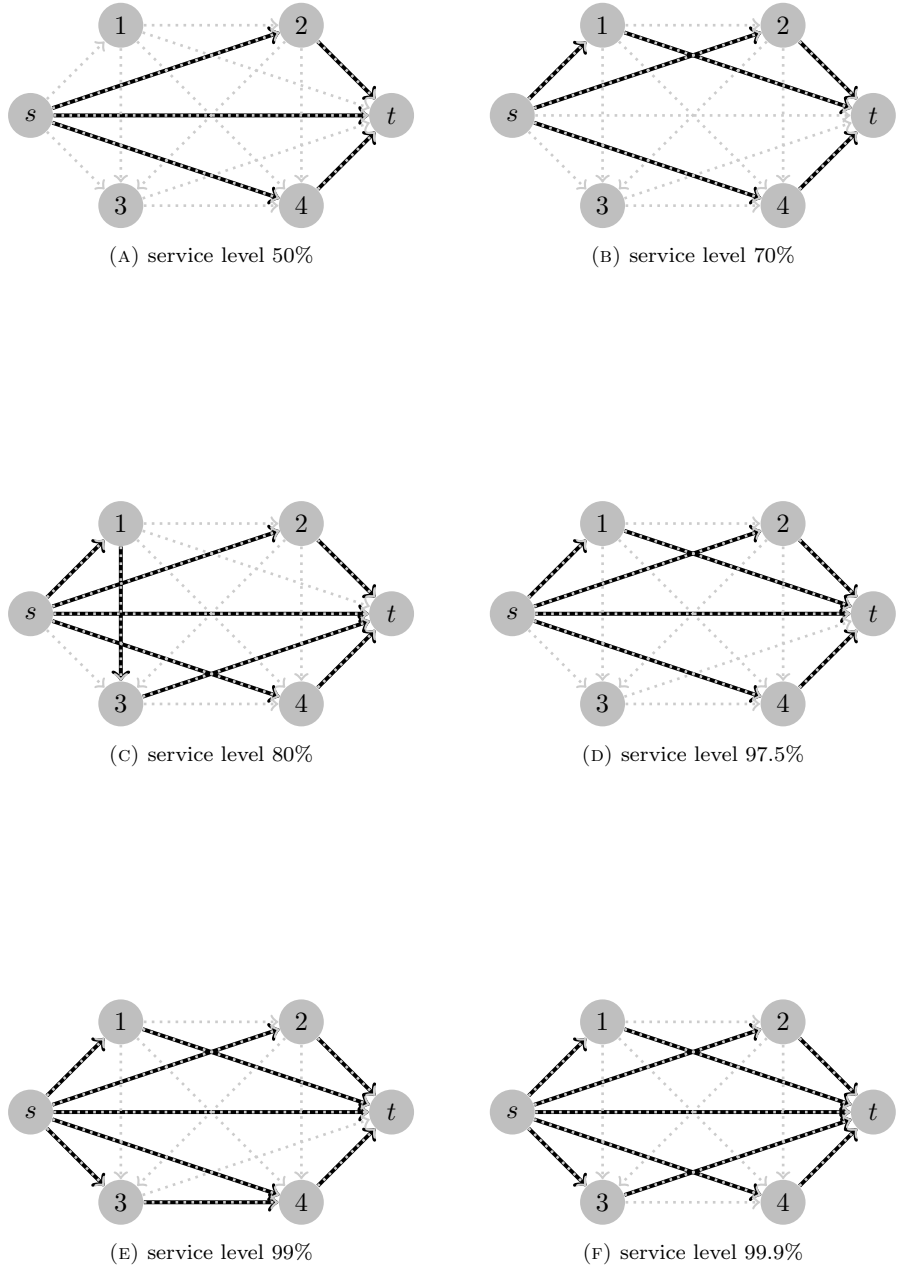
\begin{figure}[h!]
	\centering
	\begin{subfigure}{0.5\textwidth}
	\centering
	\begin{tikzpicture}[scale=1.2, auto,swap]
	    \foreach \pos/\name in {{(2,1)/1}, {(4,1)/2},
	                            {(1,0)/s}, {(5,0)/t}, {(2,-1)/3}, {(4,-1)/4}}
	        \node[vertex] (\name) at \pos {$\name$};
	    \foreach \source/ \dest in {s/2,s/4,s/t,2/t,4/t}
	        \path[selected edge] (\source)--(\dest);
	
	    \foreach \source/ \dest in {s/1,s/2,s/3, s/4,s/t,1/2,1/3,1/4,1/t,2/3,2/4,2/t,3/4,3/t,4/t}
	        \path[ignored edge] (\source)--(\dest);
	\end{tikzpicture}
	\caption{service level 50\%}
	\label{fig:1a}
	\end{subfigure}%
	~
	\begin{subfigure}{0.5\textwidth}
	\centering
	\begin{tikzpicture}[scale=1.2, auto,swap]
	    \foreach \pos/\name in {{(2,1)/1}, {(4,1)/2},
	                            {(1,0)/s}, {(5,0)/t}, {(2,-1)/3}, {(4,-1)/4}}
	        \node[vertex] (\name) at \pos {$\name$};
	    \foreach \source/ \dest in {s/1,s/2,s/4,1/t,2/t,4/t}
	        \path[selected edge] (\source)--(\dest);
	
	    \foreach \source/ \dest in {s/1,s/2,s/3, s/4,s/t,1/2,1/3,1/4,1/t,2/3,2/4,2/t,3/4,3/t,4/t}
	        \path[ignored edge] (\source)--(\dest);
	\end{tikzpicture}
	\caption{service level 70\%}
	\label{fig:1b}
	\end{subfigure}
	\newline \vspace{30mm}
	\begin{subfigure}{0.5\textwidth}
	\centering
	\begin{tikzpicture}[scale=1.2, auto,swap]
	    \foreach \pos/\name in {{(2,1)/1}, {(4,1)/2},
	                            {(1,0)/s}, {(5,0)/t}, {(2,-1)/3}, {(4,-1)/4}}
	        \node[vertex] (\name) at \pos {$\name$};
	    \foreach \source/ \dest in {s/1,s/2,s/4,s/t,1/3,2/t,3/t,4/t}
	        \path[selected edge] (\source)--(\dest);
	
	    \foreach \source/ \dest in {s/1,s/2,s/3, s/4,s/t,1/2,1/3,1/4,1/t,2/3,2/4,2/t,3/4,3/t,4/t}
	        \path[ignored edge] (\source)--(\dest);
	\end{tikzpicture}
	\caption{service level 80\%}
	\label{fig:1c}
	\end{subfigure}
	~
	\begin{subfigure}{0.5\textwidth}
	\centering
	\begin{tikzpicture}[scale=1.2, auto,swap]
	    \foreach \pos/\name in {{(2,1)/1}, {(4,1)/2},
	                            {(1,0)/s}, {(5,0)/t}, {(2,-1)/3}, {(4,-1)/4}}
	        \node[vertex] (\name) at \pos {$\name$};
	    \foreach \source/ \dest in {s/1,s/2,s/4,s/t,1/t,2/t,4/t}
	        \path[selected edge] (\source)--(\dest);
	
	    \foreach \source/ \dest in {s/1,s/2,s/3, s/4,s/t,1/2,1/3,1/4,1/t,2/3,2/4,2/t,3/4,3/t,4/t}
	        \path[ignored edge] (\source)--(\dest);
	\end{tikzpicture}
	\caption{service level 97.5\%}
	\label{fig:1d}
	\end{subfigure}
	\newline \vspace{30mm}
	\begin{subfigure}{0.5\textwidth}
	\centering
	\begin{tikzpicture}[scale=1.2, auto,swap]
	    \foreach \pos/\name in {{(2,1)/1}, {(4,1)/2},
	                            {(1,0)/s}, {(5,0)/t}, {(2,-1)/3}, {(4,-1)/4}}
	        \node[vertex] (\name) at \pos {$\name$};
	    \foreach \source/ \dest in {s/1,s/2,s/3,s/4,s/t,1/t,2/t,3/4,4/t}
	        \path[selected edge] (\source)--(\dest);
	
	    \foreach \source/ \dest in {s/1,s/2,s/3, s/4,s/t,1/2,1/3,1/4,1/t,2/3,2/4,2/t,3/4,3/t,4/t}
	        \path[ignored edge] (\source)--(\dest);
	\end{tikzpicture}
	\caption{service level 99\%}
	\label{fig:1e}
	\end{subfigure}%
	~
	\begin{subfigure}{0.5\textwidth}
	\centering
	\begin{tikzpicture}[scale=1.2, auto,swap]
	    \foreach \pos/\name in {{(2,1)/1}, {(4,1)/2},
	                            {(1,0)/s}, {(5,0)/t}, {(2,-1)/3}, {(4,-1)/4}}
	        \node[vertex] (\name) at \pos {$\name$};
	    \foreach \source/ \dest in {s/1,s/2,s/3,s/4,s/t,1/t,2/t,3/t,4/t}
	        \path[selected edge] (\source)--(\dest);
	
	    \foreach \source/ \dest in {s/1,s/2,s/3, s/4,s/t,1/2,1/3,1/4,1/t,2/3,2/4,2/t,3/4,3/t,4/t}
	        \path[ignored edge] (\source)--(\dest);
	\end{tikzpicture}
	\caption{service level 99.9\%}
	\label{fig:1f}
	\end{subfigure}
	\caption{Network configurations corresponding to different service levels.}
\end{figure}

\pagebreak

	\subsection{Distribution of minimum cut capacities for the simulated instances}
	\label{app:dist}
	\begin{figure}[H]
	\centering
	\begin{subfigure}{0.5\textwidth}
	\centering
	\includegraphics[scale = 0.15]{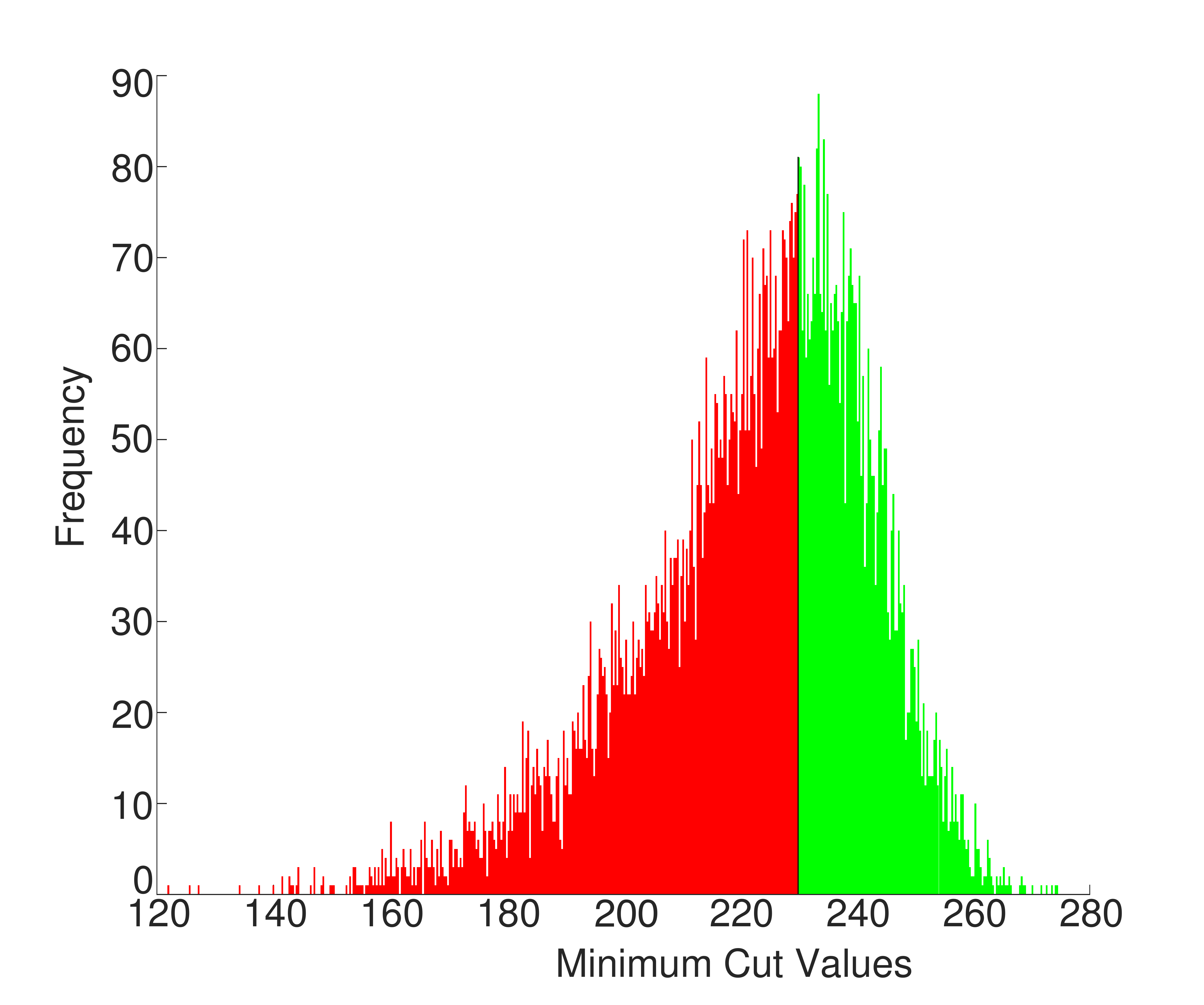}
	\caption{$1-\epsilon = 50\%$}
	\label{fig:2a}
	\end{subfigure}%
	~
	\begin{subfigure}{0.5\textwidth}
	\centering
	\includegraphics[scale = 0.15]{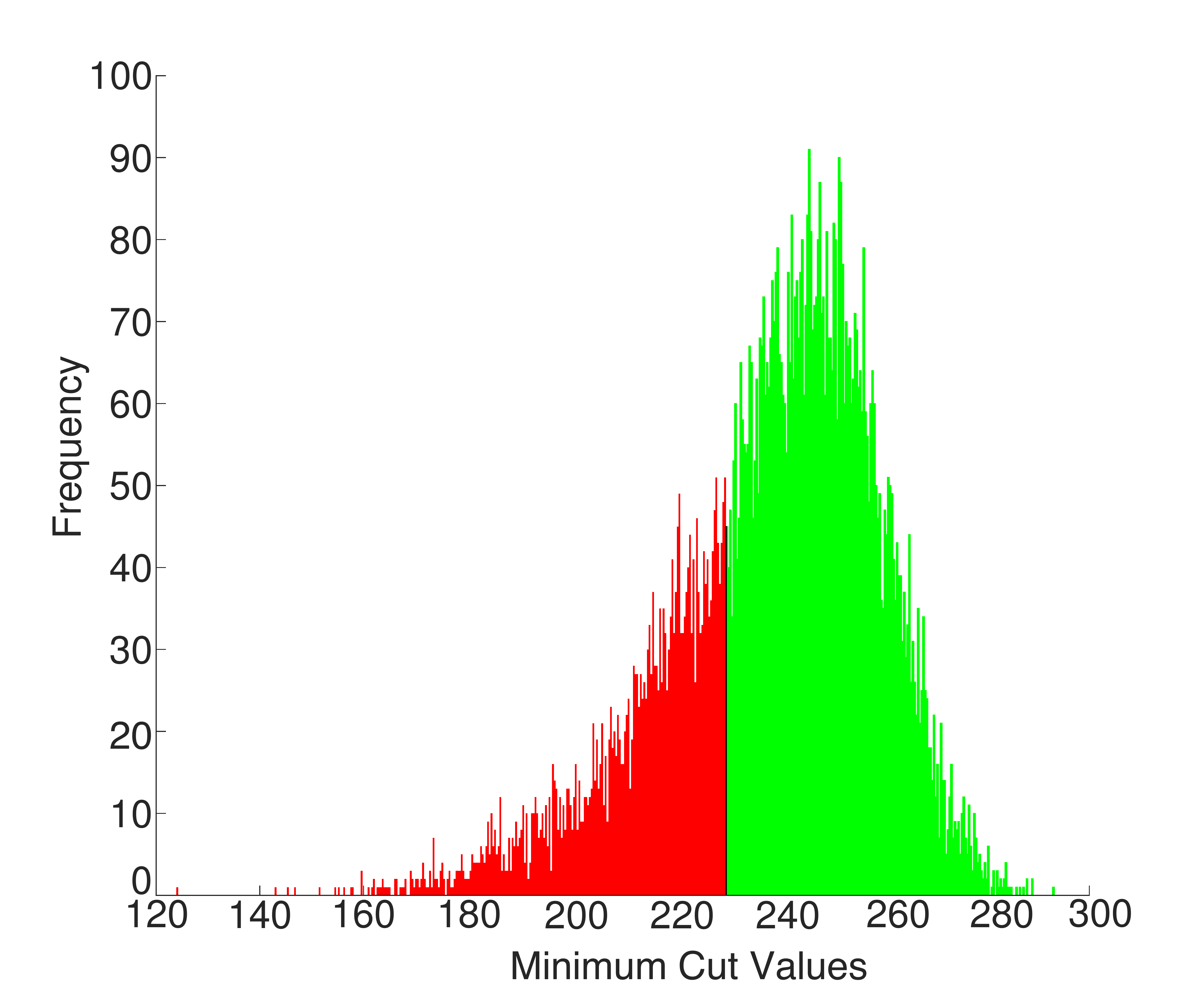}
	\caption{$1-\epsilon = 70\%$}
	\label{fig:2b}
	\end{subfigure}
	\newline \vspace{8mm}
	\begin{subfigure}{0.5\textwidth}
	\centering
	\includegraphics[scale = 0.15]{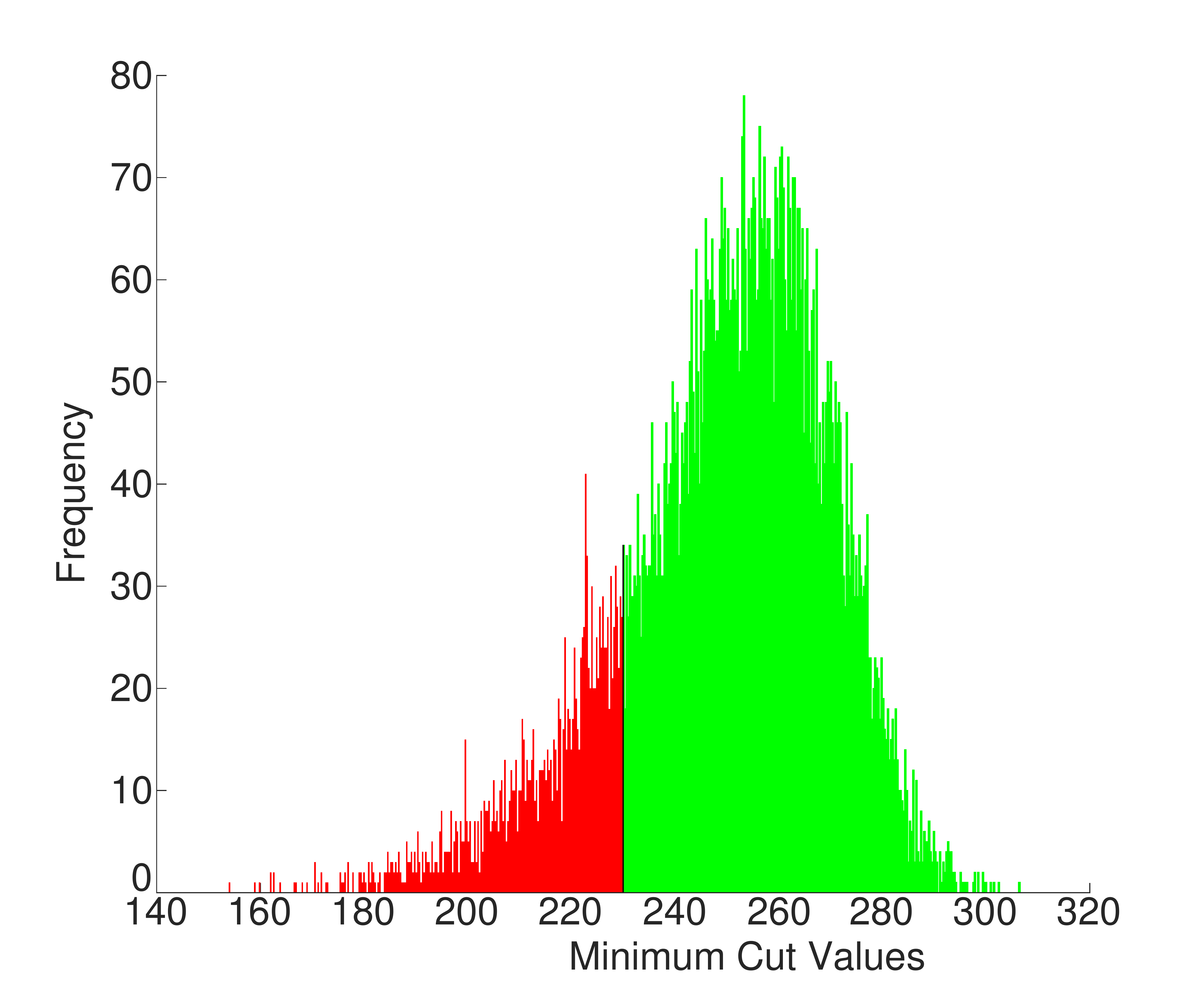}
	\caption{$1-\epsilon = 80\%$}
	\label{fig:2c}
	\end{subfigure}%
	~
	\begin{subfigure}{0.5\textwidth}
	\centering
	\includegraphics[scale = 0.15]{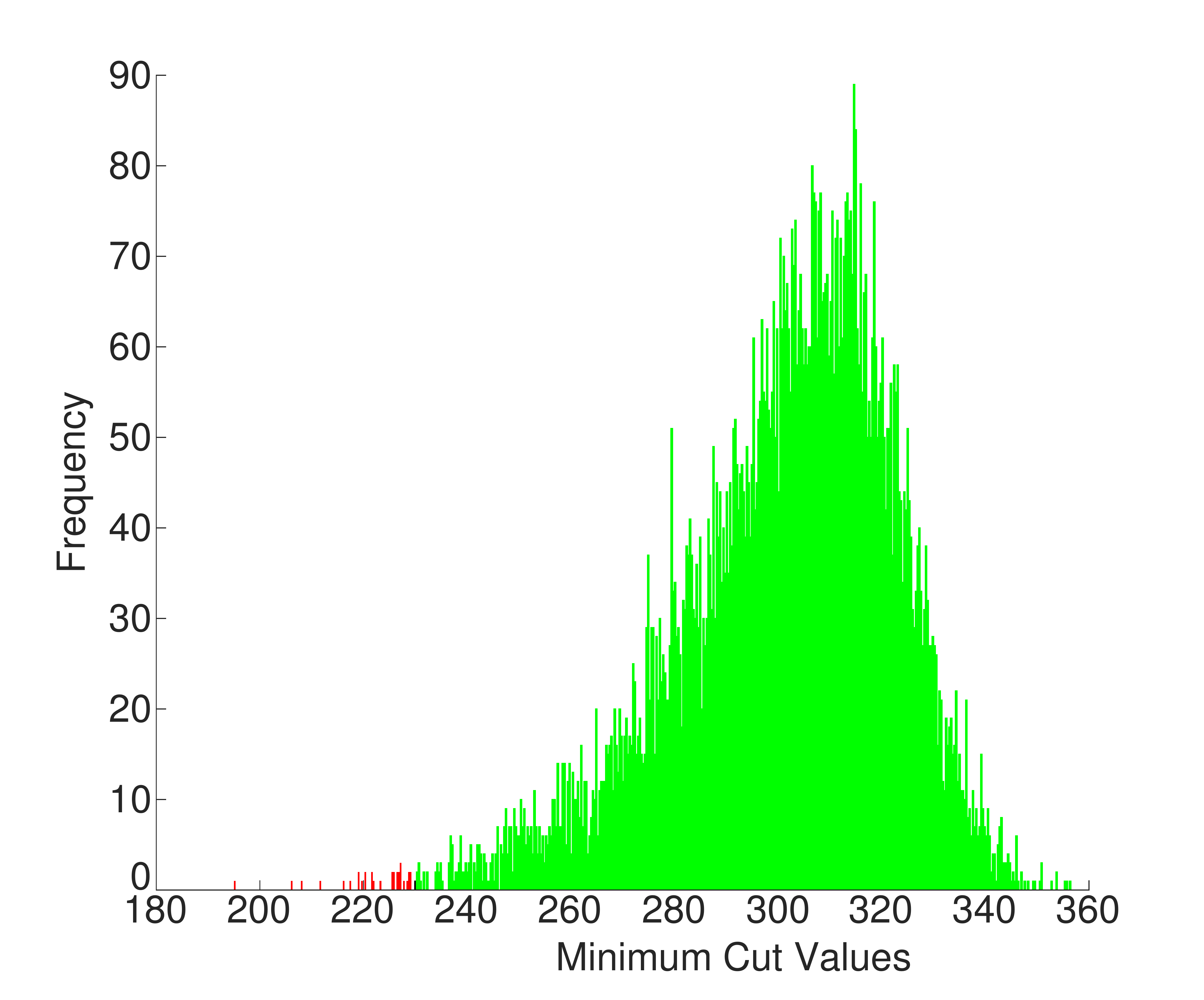}
	\caption{$1-\epsilon = 97.5\%$}
	\label{fig:2d}	\end{subfigure}
	\newline \vspace{8mm}
	\begin{subfigure}{0.5\textwidth}
	\centering
	\includegraphics[scale = 0.15]{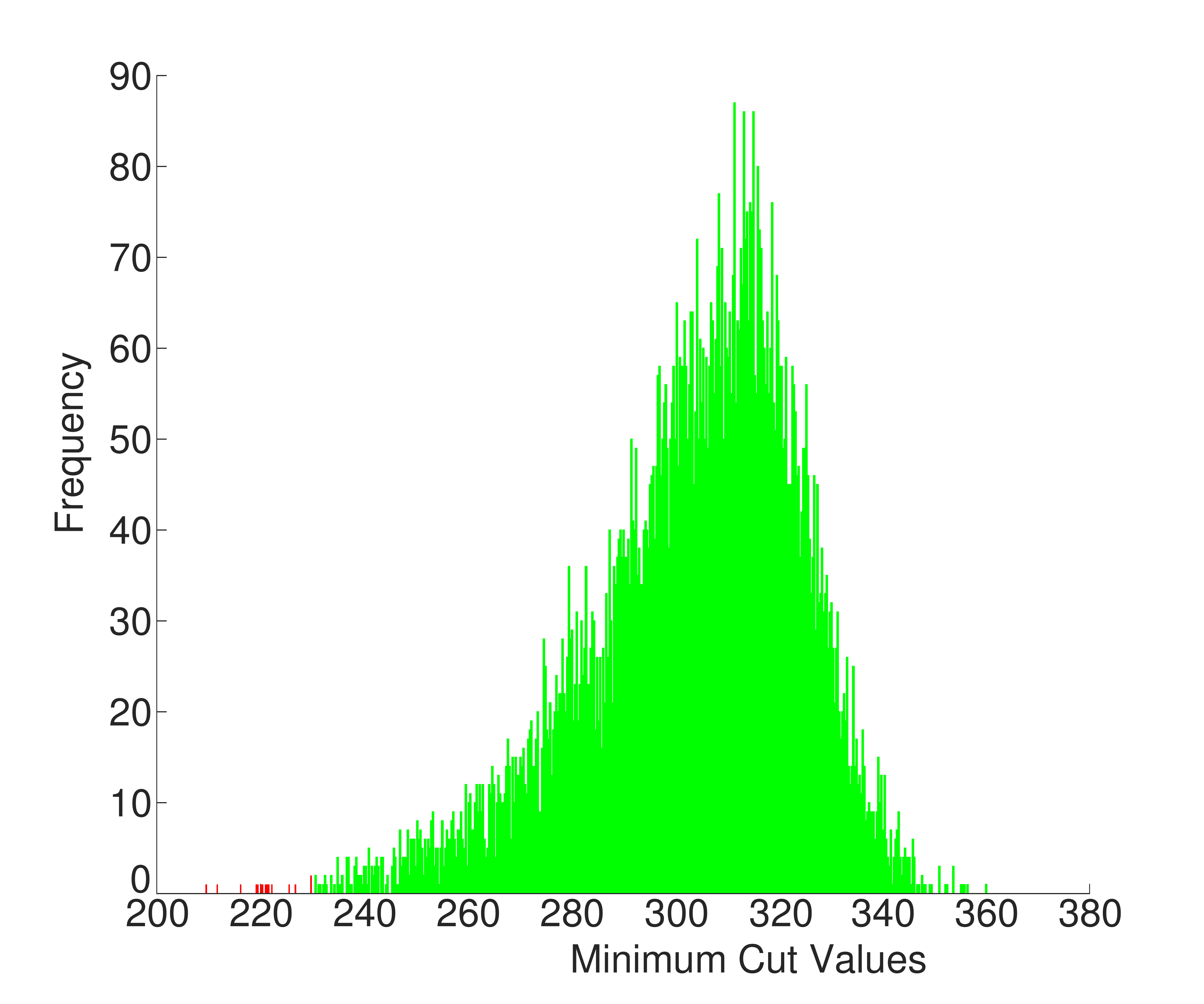}
	\caption{$1-\epsilon = 99\%$}
	\label{fig:2e}
	\end{subfigure}%
	~
	\begin{subfigure}{0.5\textwidth}
	\centering
	\includegraphics[scale = 0.15]{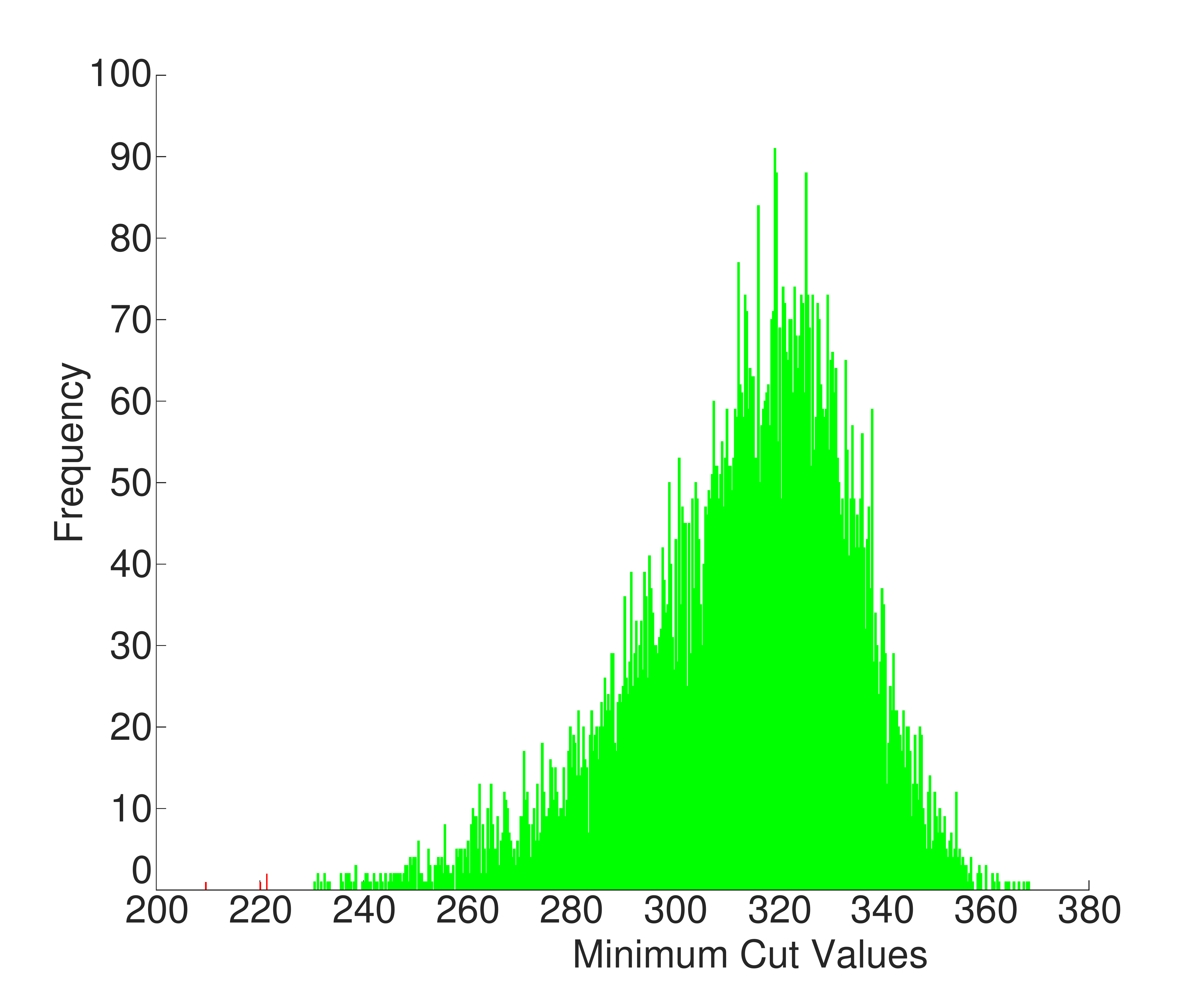}
	\caption{$1-\epsilon = 99.9\%$}
	\label{fig:2f}
	\end{subfigure}
	\caption{\scriptsize{Minimum cut capacities for simulated instances.}}
	\end{figure}
\end{appendices}

\end{document}